\newcommand{\iso}{\cong}
\newcommand{\of}[1]{\left( #1 \right)}
\newcommand{\plumb}[1]{\mathcal{P}\of{#1}} 
\newcommand{\txi}{\pmb{\xi}}
\newcommand{\TD}{\mathcal{\D}}
\newcommand{\sub}{\subset}
\newcommand{\from}{\colon}
\newcommand{\n}{\vec{n}}
\newcommand{\tv}{\boldsymbol\theta}
\newcommand{\bphi}{\boldsymbol\phi}
\newcommand{\twist}{\widetilde{\times}}
\newcommand{\bbar}[1]{\overline{#1}}
\DeclareMathOperator{\Det}{Det}
\newcommand{\p}{\boldsymbol{\mathfrak{p}}}
\newcommand{\pc}{\mathfrak{p}}
\DeclareMathOperator{\spn}{span}
	\tikzset{->-/.style={decoration={markings, mark=at position #1 with {\arrow{>}}},postaction={decorate}}}
	\newcommand{\horizon}{\draw[decoration = {zigzag,segment length = 0.15cm, amplitude = .45mm},decorate,  thin]}
\numberwithin{equation}{section}
\definecolor{NoteColor}{rgb}{1,0,0}
\theoremstyle{plain}
\newtheorem{main}{Main Result}
\newtheorem{mainTheorem}[main]{Theorem}
\newtheorem{mainConjecture}[main]{Conjecture}
\newtheorem*{mainConjecture*}{Conjecture}
\newtheorem{theorem}{Theorem}[section]
\newtheorem*{theorem*}{Theorem}
\newtheorem{lemma}[theorem]{Lemma}
\newtheorem{corollary}[theorem]{Corollary}
\newtheorem{prop}[theorem]{Proposition}
\newtheorem{conjecture}[theorem]{Conjecture}
\newtheorem*{conjecture*}{Conjecture}
\theoremstyle{definition}
\newtheorem{definition}[theorem]{Definition}
\theoremstyle{remark}
\newtheorem{remark}[theorem]{Remark}
\renewcommand{\div}{\operatorname{div}}
\newcommand{\Tr}{\operatorname{Tr}}
\newcommand{\R}{\mathbb{R}}
\newcommand{\B}{B}
\newcommand{\D}{D}
\newcommand{\CP}{\mathbb{CP}}
\renewcommand{\t}{\mathbf{t}}
\newcommand{\G}{\mathcal{G}}
\newcommand{\rank}{\operatorname{rank}}
\newcommand{\into}{\hookrightarrow}
\renewcommand{\v}{\mathbf{v}}
\renewcommand{\u}{\mathbf{u}}
\renewcommand{\i}{\mathbf{i}}
\renewcommand{\j}{\mathbf{j}}
\newcommand{\e}{\mathbf{e}}
\newcommand{\w}{\mathbf{w}}
\newcommand{\Z}{\mathbb{Z}}
\newcommand{\dv}{\operatorname{div}}
\newcommand{\diag}{\operatorname{diag}}
\renewcommand{\q}{\mathbf{q}}
\renewcommand{\r}{\mathbf{r}}
\newcommand{\group}[2]{\left\langle #1 \big | #2 \right \rangle}
\begin{document}
\title[The Geometry and Topology of
	Stationary Multi-Axisymmetric Black Holes]{The Geometry and Topology of
	Stationary Multi-Axisymmetric Vacuum Black Holes in Higher Dimensions}

\author[Kakkat]{Vishnu Kakkat}
\address{Department of Mathematics \& Department of Physics\\
	Ariel University, Ariel, Israel 40700}
\email{vishnuka@ariel.ac.il}

\author[Khuri]{Marcus Khuri}
\address{
	Department of Mathematics\\
	Stony Brook University \\
	Stony Brook, NY, 11794-3660\\
	USA}
\email{khuri@math.sunysb.edu}
\thanks{M. Khuri acknowledges the support of NSF Grants DMS-1708798, DMS-2104229, and Simons Foundation Fellowship 681443. J. Rainone was partially supported by the Research Training Group under NSF grant DMS-1547145.}

\author[Rainone]{Jordan Rainone}
\address{Department of Mathematics\\
	Stony Brook University \\
	Stony Brook, NY, 11794-3660\\
	USA}
\email{jordan.rainone@stonybrook.edu}

\author[Weinstein]{Gilbert Weinstein}
\address{Department of Mathematics \& Department of Physics\\
	Ariel University, Ariel, Israel 40700}
\email{gilbertw@ariel.ac.il}

\begin{abstract}
Extending recent work in 5 dimensions, we prove the existence and uniqueness of solutions to the reduced Einstein equations for vacuum black holes in $(n+3)$-dimensional spacetimes admitting the isometry group $\mathbb{R}\times U(1)^{n}$, with Kaluza-Klein asymptotics for $n\geq3$. This is equivalent to establishing existence and uniqueness for singular harmonic maps $\varphi\colon \R^3\setminus\Gamma\to SL(n+1,\R)/SO(n+1)$ with prescribed blow-up along $\Gamma$, a subset of the $z$-axis in $\R^3$. We also analyze the topology of the domain of outer communication for these spacetimes, by developing an appropriate generalization of the plumbing construction used in the lower dimensional case. Furthermore, we provide a counterexample to a conjecture of Hollands-Ishibashi concerning the topological classification of the domain of outer communication.
A refined version of the conjecture is then presented and established in spacetime dimensions less than 8.
\end{abstract}
\maketitle


\section{Introduction}

In several recent papers, harmonic maps into symmetric spaces were used to construct solutions of the 5-dimensional Einstein equations with symmetry group $\R\times U(1)^2$. More precisely, in this situation the Einstein vacuum equations reduce to an axially symmetric harmonic map with prescribed singularities from $\R^3$ into the symmetric space $SL(3,\R)/SO(3)$. 
In~\cite{khuriweinsteinyamada2017}, solutions of this problem corresponding to spacetimes which are asymptotically flat were constructed, while in~\cite{khuriweinsteinyamada2018} a similar approach was applied to obtain solutions with Kaluza-Klein and locally Euclidean asymptotics. Furthermore, the absence of conical singularities on the two unbounded axes was also established in \cite{khuriweinsteinyamada2018}. It is important to emphasize, however, that many of these solutions are expected to have conical singularities on at least one of the bounded components of the axis. In \cite{AlaeeKhuriKunduri2019}, existence and uniqueness results were produced for the stationary bi-axisymmetric minimal supergravity equations.
While in~\cite{khurimatsumotoweinsteinyamada}, plumbing of disk bundles was used to analyze the topology of the domain of outer communication (DOC) of these solutions. It is the purpose of the present work to extend these results to $(n+3)$-dimensional vacuum gravity with symmetry group $\R\times U(1)^n$. Similarly to the above, the Einstein vacuum equations in this setting reduce to an axially symmetric harmonic map with prescribed singularities from $\R^3$ to the symmetric space target $SL(n+1,\R)/SO(n+1)$.

A significant motivation for this higher dimensional study is to expand the availability of candidate regular solutions, as well as to expand the range of topologies exhibited. It is expected that in 4 dimensions, all asymptotically flat stationary and axially symmetric vacuum solutions with more than one horizon, the cross-sections of which must be 2-spheres, will have a conical singularity on some bounded component of the axis of rotation. Some results in this direction have been obtained~\cites{weinsteinTAMS,litian,hennigneugebauer}, but a complete resolution is still out of reach. On the other hand, in dimension 5, there are  several known regular solutions other than the $S^3$-horizon Myers-Perry~\cite{MyersPerry} black holes, namely the Emparan-Reall and Pomeransky-Sen'kov black rings~\cite{emparanreall,pomeranskysenkov} having horizon topology $S^1 \times S^2$, the black Saturns~\cite{elvangfigueras} of Elvang-Figueras, as well as the the black bi-rings~\cite{elvangrodriguez} and di-rings~\cites{iguchimishima,evslinkrishnan} found by Elvang-Rodriguez, Evslin-Krishnan, and Iguchi-Mishima. Recent work by Lucietti-Tomlinson concerning the existence of conical singularities may be found in \cite{LuciettiTomlinson1,LuciettiTomlinson2}, see also \cite{khuriweinsteinyamadaJHEP,KhuriWeinsteinYamadaNeg}.
It is reasonable to expect that many more regular solutions may be found in higher dimensions, other than trivial examples obtained for instance by taking products of known solutions with flat tori. The spacetimes that we produce provide a plethora of candidates having an increasing variety of topologies for the domain of outer communication. Moreover, even those solutions with a conical singularity should be of interest, since we expect that one could perturb time slices to obtain initial data, satisfying relevant energy conditions, with outermost apparent horizon and DOC having exotic topologies.

Motivation is also derived from questions regarding the topological classification of the domain of outer communication. Specifically, we address Conjecture~1 in~\cite{hollands2012}, which postulates that under reasonable hypotheses, the topology of a Cauchy slice in the DOC can be obtained by removing the black hole region from the connected sum of a product of spheres with the asymptotic region. We provide a counterexample to this statement, and discuss why the spirit of the conjecture may nevertheless remain valid. We then offer a refined version of the conjecture, and present a proof for spacetime dimensions less than 8.

The methods used here parallel those employed in~\cites{khuriweinsteinyamada2017,khuriweinsteinyamada2018,khurimatsumotoweinsteinyamada} with a number of notable differences which we now point out. The rod structure, an $n$-tuple of relatively prime integers associated with each axis rod, and which determines the combination of the Killing fields that degenerate on that rod, is much more complex than in the 5-dimensional setting where it was merely a pair of relatively prime integers. In particular, the admissibility condition at the corners (points where two axis rods meet), which ensures that the reconstructed spacetime has the structure of a manifold, now involves second determinant divisors. We are thus led to use Smith and Hermite normal forms. Also, the energy estimates for harmonic maps into higher rank symmetric spaces, needed to prove existence, require us to extend the construction of horocyclic coordinates to these more complicated spaces. Finally, the plumbing construction used to analyze the topology of the DOC in 5 dimensions must be generalized in higher dimensions, and involves in addition to the disk bundle integer invariants, a so called `plumbing vector' which describes how neighboring bundles are glued together.

The paper is organized as follows. The next section presents necessary background and states the main results. In Section~\ref{topology}, we apply Smith and Hermite normal forms to describe the rod structures of $T^n$-manifolds. The model map, an approximate solution of the harmonic map problem, is constructed in Section~\ref{modelmap}. While in Section~\ref{horo}, we produce horocyclic coordinates on the symmetric space target and use them to derive energy estimates. The domain of outer communication is analyzed in Section~\ref{plumbing}, using an adaptation of the technique of plumbing from the topology of disk bundles. We conclude with a study of the Hollands-Ishibashi conjecture in Section \ref{sec:class}. 

\section{Background and Main Results}

A connected asymptotically locally Kaluza-Klein stationary vacuum spacetime, with 3, 4, or 5 `large' asymptotically (locally) flat dimensions, will be referred to as \textit{well-behaved} if the orbits of the stationary Killing field are complete, the domain of outer communication (DOC) is globally hyperbolic, and the DOC contains an acausal spacelike connected hypersurface which is asymptotic to the canonical slice in the asymptotic end and whose boundary is a compact cross section of the horizon. These assumptions are used for the reduction of the stationary vacuum equations, and are consistent with \cite{hollands2012}. By \emph{asymptotically locally Kaluza-Klein} we refer to a spacetime which asymptotes to the ideal geometry $\left(\mathbb{R}^{4-s,1}/G \right)\times T^{n+s-2}$, where $T^{n+s-2}$ is a flat torus, $G\subset O(4-s)$ is a discrete subgroup of spatial rotations, and $s\in\{0,1,2\}$. If $G$ is trivial, then the moniker `locally' is removed from the terminology.

Let $(\mathcal{M}^{n+3},g)$, $n\geq 1$ be a well-behaved asymptotically Kaluza-Klein stationary $n$-axisymmetric vacuum spacetime, that is, it admits  $\mathbb{R}\times U(1)^{n}$ as a subgroup of its isometry group. As a consequence of topological censorship \cite{CGS} the orbit space is simply connected, and hence the spacetime metric $g$ may be written in Weyl-Papapetrou coordinates \cite[Theorem 8]{hollands2012} as
\begin{equation}\label{weyl}
g= f^{-1} e^{2 \sigma} (d\rho^2+dz^2)- f^{-1} \rho^2 dt^2+ f_{ij}(d\phi^i+v^idt)(d\phi^j+v^jdt),
\end{equation}
where $\left(f_{ij}\right)$ is an $n\times n$ symmetric positive definite matrix with determinant $f$, and $f_{ij}$, $v^j$, $\sigma$ are all functions of $\rho$ and $z$. Let
\begin{equation}
g_3= e^{2 \sigma} (d\rho^2+dz^2)- \rho^2 dt^2, \quad\quad \quad A^{(j)}=v^j dt,
\end{equation}
then the vacuum equations imply
\begin{equation}
d(ff_{ij} \star_3dA^{(j)})=0,
\end{equation}
where $\star_3$ represents the Hodge dual operator with respect to $g_3$. Thus, there exist globally defined \emph{twist potentials} $\omega_{i}$ such that
\begin{equation}\label{twist}
d\omega_i=2ff_{ij} \star_3dA^{(j)}.
\end{equation}
The value of the twist potentials on axes adjacent to the horizons determines the angular momenta of the black holes. Next, note that we can write the 3-dimensional reduced Einstein-Hilbert action \cite{maison1979ehlers} as
\begin{equation}\label{Action}
S=\int_{\mathbb{R}\times\left(\mathcal{M}^{n+3}/[\mathbb{R}\times U(1)^n]\right)} R^{(3)}\star_31+\frac{1}{4}\Tr(\Phi^{-1}d\Phi \wedge \star_3\Phi^{-1}d\Phi),
\end{equation}
where
\begin{equation}
\Phi=
\begin{pmatrix}
f^{-1}&  -f^{-1}\omega_i\\
-f^{-1}\omega_i  & f_{ij}+f^{-1}\omega_i\omega_j
\end{pmatrix},
\quad
i,j=1,..,n,
\end{equation}
is symmetric, positive definite, and satisfies $\det(\Phi)=1$. By varying the action with respect to $\Phi$ and applying $\R$-symmetry, a majority of the reduced Einstein vacuum equations may be obtained:
\begin{align}\label{harmonic1}
\begin{split}
\tau^{f_{lj}}=\Delta f_{lj}-f^{km}\nabla^\mu f_{lm}\nabla_\mu f_{kj}+f^{-1}\nabla^\mu \omega_l\nabla_\mu \omega_j=0,\\
\tau^{\omega_j}=\Delta \omega_{j}-f^{kl}\nabla^\mu f_{jl}\nabla_\mu \omega_{k}-f^{lm}\nabla^\mu f_{lm}\nabla_\mu \omega_j=0.
\end{split}
\end{align}
These are the equations for a harmonic map $\varphi:\R ^3\setminus \Gamma\to SL(n+1,\R)/SO(n+1)$. Given a solution to this system, the remaining metric components $v^i$ and $\sigma$ may be found~\cite{shiromizu} by quadrature. Therefore, the stationary vacuum equations in the $n$-axially symmetric setting are equivalent to a harmonic map problem with prescribed singularities on $\Gamma$, a subset of the $z$-axis which represents the axes of the $U(1)^n$-action or rather those points associated with a nontrivial isotropy group.

Consider the orbit space $\mathcal{M}^{n+3}/[\mathbb{R}\times U(1)^n]$. It is homeomorphic to the right half plane $\{(\rho,z):\rho>0\}$, and its boundary $\rho=0$ encodes the topology of the horizons \cite{harmark2004stationary,hollandsyazadjiev,hollands2011}. The domain for the harmonic map is obtained from this observation by adding an ignorable angular coordinate $\phi\in [0,2\pi)$, yielding $\mathbb{R}^3$ parametrized by the cylindrical coordinates $(\rho,z,\phi)$. The harmonic map itself is axisymmetric, as it does not depend on $\phi$. Uniqueness theorems for higher dimensional stationary $n$-axisymmetric black holes ultimately reduce to the uniqueness question for such harmonic maps \cite{hollands2011}, with prescribed axis behavior determined by invariants called \emph{rod structures} as well as a set of
\emph{potential constants}; see Section~\ref{rodstructure} below for details. Together this information forms a \emph{rod data set}, which may be encoded in an approximate solution referred to as a \emph{model map}. We then say that the model map \emph{corresponds} to the rod data set.
If the rods that represent horizon cross-sections have nonzero length, then the rod structure is associated with nondegenerate black hole solutions
\cite[Lemma 7]{hollands2011}.
The prescribed harmonic map problem is solved by finding a solution which is \emph{asymptotic} to the model map. A precise description of the properties required for the model map is given in Definition \ref{modeldef}, and the notion of asymptotic maps is reviewed in Definition \ref{asymptotic}. Our first main result is a generalization of Theorem~1 in~\cite{khuriweinsteinyamada2017}. In particular, it extends the previous result to higher dimensions, and removes the assumption of a compatibility condition for the rod data. However the notion of \emph{admissibility}, which is explained in Section \ref{rodstructure}, is still retained since this is required to ensure that the total space arising from the rod structures is a manifold.


\begin{mainTheorem}\label{theorem1}\

\begin{enumerate}[(a)]
\item For any admissible rod data set, with nondegenerate horizon rods, there exists a model map $\varphi_0:\mathbb{R}^3\setminus \Gamma\to SL(n+1,\mathbb{R})/SO(n+1)$ which corresponds to the rod data set.

\item There exists a unique harmonic map $\varphi:\mathbb{R}^3\setminus \Gamma\to SL(n+1,\mathbb{R})/SO(n+1)$ which is asymptotic to the model map $\varphi_0$.

\item A well-behaved asymptotically (locally) Kaluza-Klein solution of the $(n+3)$-dimensional vacuum Einstein equations admitting the isometry group $\mathbb{R}\times U(1)^n$ can be constructed from $\varphi$ if and only if the resulting metric coefficients are sufficiently smooth across $\Gamma$, and there are no conical singularities on any bounded axis rod.
\end{enumerate}
\end{mainTheorem}

Consider now the topology of the domain of outer communication. In 5 dimensions, we obtained a classification theorem \cite[Theorem 1]{khurimatsumotoweinsteinyamada} in which the canonical
slice was decomposed into a disjoint union of linearly plumbed disc bundles over 2-spheres, and a few other more simple pieces. There does not seem to be a direct natural generalization of linear plumbing which is applicable to the higher dimensional setting of stationary $n$-axisymmetric vacuum spacetimes. In fact, a naive approach leads to a construction that is not unique, as there are various ways to glue the neighboring toroidal fibers together.
In order to remedy this issue we define a generalized or \emph{toric plumbing} with additional parameters $\p_i \in\mathbb{Z}^n$ which are called \emph{plumbing vectors}, see Definition~\ref{def:plumbing}.
In the next result, the higher dimensional generalization of \cite[Theorem 1]{khurimatsumotoweinsteinyamada} is presented. This theorem applies beyond the realm of vacuum solutions, namely to those satisfying the null energy condition,
which is a hypothesis included to ensure that the topological censorship theorem \cite[Theorem 5.3]{CGS}, \cite[Theorem 5]{hollands2012} is valid.

We will use the following notation for the building blocks of the decomposition. The axis $\Gamma$ is a union of intervals $\{\Gamma_{i,j}\}_{i=1}^{I_j +2}$, $j=1,\ldots,\mathfrak{J}$ called \emph{axis rods}, each of which is defined by a particular isotropy subgroup of $U(1)^n$. With each such rod that is flanked on both sides by another axis, we associate $\txi_{i,j} =\xi_{i,j} \times T^{n-3}$ where $\xi_{i,j}$ is a ($\D^2$) disc-bundle over either the 3-sphere $S^3$, the ring $S^1 \times S^2$, or a lens space $L(p,q)$ with $p>q$ relatively prime positive integers.  A sequence of such product spaces may be glued together, with the help of plumbing vectors, to form the toric plumbing $\plumb{\txi_{1,j} , \ldots , \txi_{I_j ,j} \Big | \p_{2,j},  \ldots, \p_{I_j,j}}$. The topologies of $\xi_{i,j}$, and the plumbing vectors themselves $\p_{i,j}$, are completely determined by the rod structures of the axes involved.

\begin{mainTheorem}
\label{main:DOC}
The topology of the domain of outer communication of an orientable well-behaved asymptotically Kaluza-Klein stationary $n$-axisymmetric spacetime, with $n\geq 3$, and satisfying the null energy condition is $\mathcal{M}^{n+3}=\mathbb{R}\times M^{n+2}$ where the Cauchy surface is given by a union of the form
\begin{equation}\label{gfuiqihgkjso}
M^{n+2} = \bigcup_{j=1}^J
 \plumb{\txi_{1,j} , \ldots , \txi_{I_j ,j} \Big | \p_{2,j},  \ldots, \p_{I_j,j}}
  \bigcup_{k=1}^{N_1} C^{n+2}_k
  \bigcup_{m=1}^{N_2}\B^4_m\times T^{n-2}
  \bigcup M_{\text{end}}^{n+2},
\end{equation}
in which each constituent is a closed manifold with boundary and all are mutually disjoint expect possibly at the boundaries. Here $C_k^{n+2}$ is $[0,1]\times \D^2\times T^{n-1}$, $B^4_{m}$ denotes a 4-dimensional ball, and the asymptotic end $M_\text{end}^{n+2}$ is given by $\R_+\times Y\times T^{n-2}$ where $Y$ represents either $S^3$, or $S^1 \times S^2$. Furthermore $J$, $N_1$, and $N_2$ are the number of connected components of the axis which consist of three or more axis rods, one finite axis rod, and two axis rods, respectively.
\end{mainTheorem}

This result identifies the fundamental constituents of the DOC, and its proof shows how they may be computed from the rod structure of the torus action. On the other hand, it does not express the topology in a concise way. In order to achieve this goal, at least in low dimensions, we observe in the next result that
a simplified expression may be obtained by filling in the horizons and capping off the asymptotic end with appropriately chosen toric plumbings. In particular, this produces a `compactified DOC' which is a simply connected $(n+2)$-manifold without boundary admitting an effective $T^n$-action. Classification results for such manifolds \cite{orlik1970actions, Oh5dim, Oh6dim} may then be applied to obtain the following theorem, which generalizes ~\cite[Theorem 2]{khurimatsumotoweinsteinyamada} where the case $n=2$ was treated.

\begin{mainTheorem}
\label{main:456classification}
Consider the domain of outer communication $\mathcal{M}^{n+3}=\mathbb{R}\times M^{n+2}$ of an orientable well-behaved asymptotically Kaluza-Klein stationary $n$-axisymmetric spacetime, with $2\leq n\leq 4$, satisfying the null energy condition, and having $H$ components of the horizon cross-section. There exists a choice of horizon fill-ins $\{\bar{M}_{h}^{n+2}\}_{h=1}^{H}$ and a cap for the asymptotic end $\bar{M}_{\text{end}}^{n+2}$, each of which is either the product of a 4-ball with a torus $\B^4\times T^{n-2}$ or a finite toric plumbing, such that the compactified Cauchy surface
\begin{equation}
\bar{M}^{n+2}=\left(M^{n+2}\setminus M^{n+2}_{\text{end}}\right)\bigcup_{h=1}^{H}\bar{M}_{h}^{n+2}\bigcup
\bar{M}^{n+2}_{\text{end}}
\end{equation}
is homeomorphic to one of the following possibilities, where $k=b_2(\bar{M}^{n+2})$ is the second Betti number and $0\leq \ell\leq k$.
\begin{center}\label{ahkgka}
\begin{tabular} {|c|c|c|}
$n=2$ & $n=3$ & $n=4$ \\
\hline
$S^4$
& $S^5$
& $S^3\times S^3$ \\

$\# \tfrac{k}{2} (S^2 \times S^2)$
& $ \# k (S^2\times S^3)$
& $\# k(S^2\times S^4)\# (k+1)(S^3\times S^3)$ \\

$\ell\mathbb{CP}^2 \# (k-\ell)\bbar{\CP^2}$
& $(S^2\twist S^3)  \# (k-1) (S^2\times S^3)$
& $(S^2\twist S^4)  \# (k-1)(S^2\times S^4)\# (k+1)(S^3\times S^3)$ \\
\end{tabular}
\end{center}
\medskip
\noindent Moreover, the toric plumbings for each fill-in and cap may be computed algorithmically from the neighboring rod structures of each horizon and the asymptotic end.
\end{mainTheorem}

In the chart above, the first row consists of the case when the compactified DOC is $2$-connected, while the second and third rows consist of the spin and non-spin scenarios, respectively. In the second and third rows the second Betti number $k$ is positive, and is even for dimension 4 with the spin property. The twisted product notation is used to denote the nontrivial (and non-spin) sphere bundles over $S^2$. Furthermore, note that $S^2\twist S^2 \cong\CP^2\#\bbar{\CP^2}$
and $\CP^2\#\bbar{\CP^2}\#\CP^2 \cong \CP^2\# (S^2\times S^2)$~\cite[Remark 5.8]{orlik1970actions}. This together with \cite[Theorem II.4.2, pg. 313]{Pau}, shows that in the non-spin 4 dimensional case an alternate expression for the decomposition may be given in terms of a connected sum of
a number of $S^2 \times S^2$'s, and either a single $S^2\twist S^2$ or a number of $\mathbb{CP}^2$'s. This is analogous to the result for dimensions 5 and 6 modulo the presence of the complex projective planes. Theorem \ref{main:456classification} may be thought of as evidence towards a modified version of a conjecture made by Hollands and Ishibashi in \cite[Conjecture 1]{hollands2012}, concerning the topological classification of the DOC under a spin assumption. In Section \ref{sec:class} we construct a spacetime which serves as a counterexample to the original conjecture, and
this motivates the refinement below. Note that Theorem \ref{main:456classification} shows that the following conjecture holds true for $n=2,3,4$, if the compactified DOC is spin.

\begin{mainConjecture}
\label{main:refined}
Consider the domain of outer communication $\mathcal{M}^{n+3}=\mathbb{R}\times M^{n+2}$ of an orientable well-behaved asymptotically Kaluza-Klein stationary $n$-axisymmetric spacetime, with $n\geq 2$, satisfying the null energy condition. If the Cauchy surface $M^{n+2}$ is spin, then there exists a choice of horizon fill-in and a cap for the asymptotic end, such that the corresponding compactified DOC
is homeomorphic to
\begin{equation}
\#^{n}_{i=2} m_i \cdot S^i\times S^{n+2-i}
\end{equation}
for some nonnegative integers $m_i$.
\end{mainConjecture}


\renewcommand{\B}{\mathcal{B}}
\renewcommand{\D}{\mathcal{D}} 



\section{Topology and the Rod Structure}
\label{rodstructure}

\renewcommand{\B}{B}
\renewcommand{\B}{D}

\label{topology}


The topology of the spacetimes considered here will always be of the form $\R\times M^{n+2}$, due to the assumption of global hyperbolicity. The time slice $M^{n+2}$ is assumed to admit an effective action by the torus $T^n$, and hence the quotient map $M^{n+2}\to M^{n+2}/T^n$ exhibits $M^{n+2}$ as a $T^n$-bundle over a $2$-dimensional base space with possibly degenerate fibers on the boundary.
Fibers over interior points are $n$-dimensional, while fibers over points along the boundary can be $(n-1)$ or $(n-2)$-dimensional. The set of points where the fiber is $(n-1)$-dimensional are called \emph{axis rods} while the points with an $(n-2)$-dimensional fiber are called \emph{corners}. The set of corners is always discrete. If in addition topological censorship holds, as is the case under the hypotheses of the main theorems, then the base space $M^{n+2}/T^n$ is homeomorphic to a half plane~\cite{hollands2011}. The boundary $\partial\R^2_+$ of this half-plane is divided into disjoint intervals separated by corners or \emph{horizon rods} where the fibers do not degenerate. The boundary points of horizon rods are called \emph{poles}. Associated to each axis rod interval $\Gamma_i\sub \partial \R^2_+$ is a vector $\v_i\in \Z^n$ called the \emph{rod structure}, that defines the 1-dimensional isotropy subgroup $ \R/\Z \cdot \v_i \sub \R^n / \Z^n \iso T^n$ for the action of $T^n$ on points that lie over $\Gamma_i$. The topology of the DOC is determined by the rod structures, namely
\begin{equation}
M^{n+2}\iso (\R^2_+\times T^n)/\sim
\label{eq:T^n}
\end{equation} 
where the equivalence relation $\sim$ is given by $(\mathbf{p},\bphi) \sim (\mathbf{p}, \bphi + \lambda \v_i )$ with $\mathbf{p}\in \Gamma_i$, $\lambda\in \R/\Z$, and $\bphi \in T^n$. This setting is a special case of the following construction.

\begin{definition}
\label{def:T^n-space}
A \emph{simple $T^n$-manifold} is an orientable smooth manifold $M^k$, $k\geq n$ with an effective $T^n$-action, in which the quotient space $M^k/T^n$ is simply connected and the quotient map defines a trivial fiber bundle over the interior of the quotient.
\end{definition}

If $M^{n+2}$ is a simply connected $T^n$-manifold (it admits an effective $T^n$-action) such that $\partial(M^{n+2}/T^n)\ne\emptyset$, then it is necessarily a simple $T^n$-manifold, see Theorem \ref{thrm:simple}.
As above, the topology of an $(n+2)$-dimensional simple $T^n$-manifold is completely determined by the set of rod structures. A graphical representation of this information is called a \emph{rod diagram}, see Figure~\ref{fig:exRodDiagram} for examples. These are drawn as either a disk in the compact case, or a half plane in the noncompact case, in which the boundary is divided into segments with associated rod structure vectors indicating the linear combination of generators that degenerate at the axes. Black dots represent corners or poles where two rods meet, and the segments drawn with jagged lines are horizon rods along which the torus action is free. We will revisit this figure after Lemma \ref{thrm:Hermite}.

It should be noted that the notion of rod structures given above does not guarantee a unique presentation. Indeed, the vectors $\v$ and $2\v$ both generate the same isotropy subgroup $\R/\Z \cdot \v$, and thus both can be used to describe the same rod structure. In order to identify a unique presentation (up to a choice of sign), it is natural to restrict attention to primitive elements. A vector or a set of vectors $\{\v_1, \ldots, \v_k \} \sub \Z^n$ forms a \emph{primitive set}, if they are linearly independent and
\begin{equation}
\Z^n \cap \operatorname{span}_\R\{\v_1, \ldots, \v_k \} = \operatorname{span}_\Z\{\v_1, \ldots, \v_k \}.
\end{equation}
For a single vector $\v=(v_1,\dots,v_n)$, this is equivalent to the components being relatively prime, that is $\gcd\{v_1,\dots,v_n \}=1$. Next, observe that the group $GL(n,\Z)$ of unimodular matrices provides the group of coordinate transformations for $T^n=\R^n/\Z^n$. Two rod diagrams are equivalent if every rod structure of one is obtained from the corresponding rod structure of the other by the action of the same unimodular matrix. Thus, quantities depending only on the $T^n$-structure will be invariant under $GL(n,\Z)$ transformations. The following proposition exhibits an example of such a quantity, $\Det_k$, referred to as the \emph{$k^\text{th}$ determinant divisor} \cite[Chapter II, Section 14]{newman_1972}. In the statement we will use the multi-index notation $I^n_k$, for $k\leq n$, to denote the set of $k$-tuples $\i=(i_1,\dots,i_k)\in\Z^k$ such that $1\leq i_1<\dots<i_k\leq n$.

\begin{figure}\centering
\includegraphics[width=7cm]{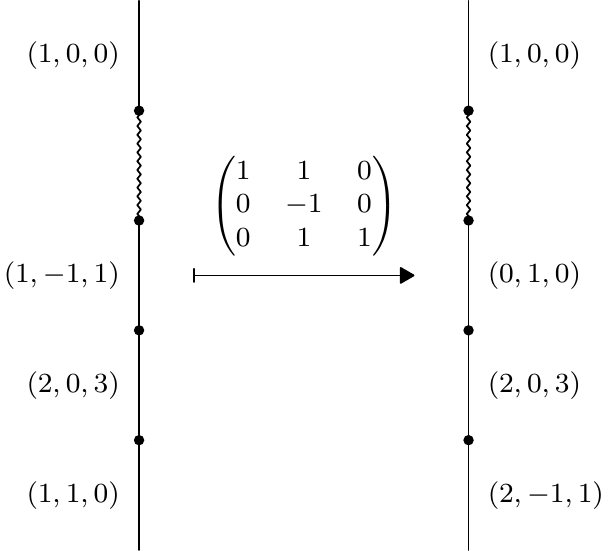}
\caption[caption]{This figure shows two rod diagrams, separated by an arrow, both depicting $(5+1)$-dimensional spacetimes with a single black hole. Each rod diagram shows the $2$-dimensional quotient space as the right-half-plane with the vertical lines being their boundaries. The jagged lines are black hole horizon rods, the interior of which correspond to the product of an open interval with $T^3$. The rod structures flanking the horizon rod yield horizon cross-sectional topology $S^1\times S^3$. The two rod diagrams depict the same spacetime. The unimodular matrix in the middle represents a coordinate change on $T^n$. In particular, it is the transformation matrix from Lemma \ref{thrm:Hermite} which sends the rod structures on the left to their Hermite normal form on the right.}
\label{fig:exRodDiagram}
\end{figure}


\begin{prop}\label{prop1}
Let $\v_1,\dots,\v_m\in\Z^{n}$, $k\leq \mathrm{min}\{m,n\}$, and set
\begin{equation}
\Det_k(\v_1,\dots,\v_m)=\gcd\{Q_{\j}^{\i}\mid \i\in I^n_k, \> \j\in I^m_k\},
\end{equation}
where $Q_{\j}^{\i}$ is the determinant of the $k\times k$ minor obtained from the matrix defined by the column vectors $\v_{1},\dots,\v_{m}$, by picking columns $\j$ and rows $\i$. Then $\Det_k$ is invariant under $GL(n,\Z)$, that is
\begin{equation}
\Det_k(\v_1,\dots,\v_m) = \Det_k(A\v_1,\dots,A\v_m )
\end{equation}
for all $A\in GL(n,\Z)$.
\end{prop}

\begin{proof}
Let $\omega\in\bigwedge^k\mathbb{Z}^n$ be a $k$-form on $\mathbb{Z}^n$. Each such form can be written as a linear combination of the basis elements
$\{\e^{i_1}\wedge\dots\wedge \e^{i_k} \mid \i\in I^n_k\}$, where $\{\e^i\}$ is the basis of covectors dual to the standard basis $\{\e_{j}\}$ of $\Z^n$, so that $\e^{i}(\e_{j})=\delta^i_j$. Thus
\begin{equation} \label{forms}
\omega=\sum_{\i\in I^n_k} a_{i_1\dots i_k}\e^{i_1}\wedge\dots\wedge \e^{i_k}, \quad\quad a_{\i}\in\Z,
\end{equation}
where by definition $\e^{i_1}\wedge\dots\wedge \e^{i_k}(\v_{j_1},\dots,\v_{j_k})$ is the minor determinant $Q^{\i}_{\j}$. Consider the $k\times k$ minor determinant  ${Q'}^{\i}_{\j}$ of the matrix formed from the column vectors $A\v_{j_1},\dots,A\v_{j_k}$, and observe that ${Q'}^{\i}_{\j}$ is multilinear and antisymmetric in $\{\v_{j_1},\dots,\v_{j_k}\}$. Therefore it is a linear combination as in~\eqref{forms}, and may be expressed as
\begin{equation}
{Q'}^{\i}_{\j}=\sum_{\i '\in I^n_k} a^{\i}_{\i '}{Q}^{\i '}_{\j}.
\end{equation}
Observe that if $p\in \Z$ divides ${Q}^{\i '}_{\j}$ for all $\i'\in I^n_k$, then $p$ also divides ${Q'}^{\i}_{\j}$ and hence
\begin{equation}\label{P.1}
\Det_k(A\v_1,\dots,A\v_m )=\gcd\{{Q'}^{\i}_{\j}\mid \i\in I^n_k, \> \j\in I^m_k\}\geq\gcd\{Q^{\j '}_{\i}\mid \i '\in I^n_k, \> \j\in I^m_k \}=\Det_k(\v_1,\dots,\v_m).
\end{equation}
Furthermore since $A^{-1} \in GL(n,\mathbb{Z})$, the same reasoning shows that
\begin{equation}\label{P.2}
\Det_k(\v_1,\dots,\v_m )=	\Det_k(A^{-1}(A\v_1),\dots,A^{-1}(A\v_m) )\geq \Det_k(A\v_1,\dots,A\v_m ).
\end{equation}
The desired invariance follows from these two inequalities.
\end{proof}



A corner point between two adjacent axis rods is \emph{admissible} if the total space over a neighborhood of the corner is a manifold. The importance of the second determinant divisor in the current context arises from the fact that it determines whether or not a corner is admissible.  Since the corner point represents an $(n-2)$-torus within the total space, a tubular neighborhood will be a manifold if and only if it is homeomorphic to $B^4\times T^{n-2}$, or equivalently if its boundary is $S^3 \times T^{n-2}$. This last criteria occurs precisely when there is a matrix $Q\in GL(n,\Z)$ such that $Q\v=\e_1$ and $Q\w=\e_2$, where $\v$, $\w$ are the rod structures of the axis rods forming the corner, and $\e_1$, $\e_2$ are members of the standard basis for $\Z^n$. Corollary \ref{thrm:admissible} below, guarantees that such a $Q$ exists if and only if $\Det_2 (\v, \w)=1$. The statement of this result uses the \emph{Hermite normal form}, whose properties are listed in the next lemma. A proof of this lemma can be found in~\cite{Mader}. The Hermite normal form may be viewed as the integer version of the reduced echelon form, or as the integer version of the $QR$ decomposition for real matrices.


\begin{lemma}\label{Hermitenormalform}
Let $A$ be a $n\times k$ integer matrix. There exist integer matrices $Q$ and $H$ such that $QA=H$, where $Q$ is unimodular and $H=\left( h_{ij}\right)$ has the following properties.
\begin{enumerate}
\item For some integer $m$, the rows $1$ through $m$ of $H$ are non-zero, and the rows $m+1$ through $n$ are rows of zeros.

\item There is a sequence of integers $1\leq r_1 < r_2 <\cdots < r_m \leq r=\rank A$ such that the entries $h_{ir_i}$ of $H$, called \emph{pivots}, are positive for $i=1, \ldots, m$. The pivot $h_{i r_i}$ is the first non-zero element in the row $i$, that is, $h_{ij}=0$ for $1\leq j<r_i$.

\item In each column of $H$ that contains a pivot, the entries of the column are bounded between $0$ and the pivot, that is, for $i=1, \ldots, m$ and $1\leq j<i$ we have $0\leq h_{jr_i}<h_{ir_i}$.
\end{enumerate}
The matrix $H$ is unique and is known as the \emph{Hermite normal form} of $A$. Furthermore, the Hermite normal form of $BA$ is equal to the Hermite normal form of $A$ whenever $B$ is a unimodular matrix. Finally, the unimodular matrix $Q$, known as the \emph{transformation matrix} of $A$, is unique when $A$ is an invertible square matrix.
\label{thrm:Hermite}
\end{lemma}

It should be noted that if the first $l$ columns of $A$ are linearly independent, then the upper-left $l\times l$ block of the Hermite normal form of $A$ is upper triangular with nonzero diagonal entries, namely $r_i=i$ for $i=1, \ldots, l$. For our purposes, the matrix $A$ will typically consist of a collection of $k$ rod structures for rods which are not necessarily adjacent. An example of this is shown in Figure~\ref{fig:exRodDiagram}, where the $3\times 4$ matrix $A$ is assembled from the rod structures on the left (treated as column vectors), and sent to its Hermite normal form consisting of the rod structures on the right, via the transformation matrix that appears in the middle of the diagram.

\begin{remark}
\label{rmk:simple}
If rod structures $\{\v_1, \v_2, \v_3\}$ arise from three consecutive rods with admissible corners, then more information is known about their Hermite normal form $\{\w_1, \w_2, \w_3\}$. In particular $\w_1 = \e_1$, $\w_2 = \e_2$, and $\w_3 = (q, r, p, 0, \ldots, 0)$ with $0\leq q< p$, $0\leq r < p$, $p = \Det_3(\v_1, \v_2, \v_3)$, and $\gcd\{q,p\}=1$ if the set of vectors is linearly independent. In the case of a linearly dependent triple, we have $p=0$ and $q=1$, while $r$ is unconstrained. Furthermore, given any integers $\mu,\lambda\in \Z$ there exists a coordinate change which sends $\v_i$ to $\w'_i$ where
\begin{equation}
\label{eq:qrp}
\begin{split}
\w'_1 & =(1,0, \ldots, 0) \\
\w'_2 & = (0,1,0, \ldots, 0)\\
\w'_3 & = (q+ \mu p, r + \lambda p, p, 0, \ldots, 0)\text{.}
\end{split}
\end{equation}
These observations will be utilized in Section~\ref{plumbing}.
\end{remark}

In order to establish the relationship between the admissibility condition for corners and the 2nd determinant divisor, we recall the \emph{Smith normal form}. This may be considered as the integer matrix analog of the singular value decomposition, and is utilized in the classification of finitely generated Abelian groups. This latter fact will be employed when we compute the fundamental group of the DOC in Theorem~\ref{thrm:simple}. A proof of the following result can be found in~\cite{newman_1972}.

\begin{lemma}
Let $A$ be an $n\times k$ integer matrix of rank $l$. There exist integer matrices $U$, $V$, 
and $S$ such that $UAV=S$. The matrices $U$ and $V$ are unimodular, and $S$ is diagonal with entries $s_i$
such that $s_i|s_{i+1}$ for $1\leq i<l$. These entries, referred to as elementary divisors, satisfy $s_i=0$ 
for $i>l$ with all others computed by
\begin{equation}\label{eq:si}
s_i = \frac{\Det_i(A)}{\Det_{i-1}(A)}, \quad\quad i\leq l,
\end{equation}
where we have set $\Det_0(A)=1$. The matrix $S$ is unique and is known as the \emph{Smith normal form} of $A$.
\label{thrm:Smith}
\end{lemma}

The distinction between the Hermite and Smith normal forms, in the context of rod structures, is as follows.
The transformations used to obtain Hermite normal form are always actions by $n\times n$ matrices on the left. Such an action corresponds to shuffling the Killing vectors around by linear combinations. This does not affect the topology of the total space nor its toric structure, only the representation of the torus $T^n\cong \R^n/\Z^n$ and thus the rod structures.
By contrast, Smith normal form also includes actions on the right by $k\times k$ matrices. These actions correspond to shuffling the axis rods themselves. This changes the topology of our space, possibly no longer making it a manifold.
Consequently, when seeking out a simpler presentation of the rod structures we will invoke the Hermite normal form in order to avoid changing the topology. Two exceptions to this are in the proof of Theorem~\ref{thrm:simple}, where only the integer span of the rod structures is significant and not their order, and in the proof of Corollary \ref{thrm:admissible} below, where the Hermite and Smith normal forms coincide.



\begin{corollary}
Let $A$ be an $n\times k$ integer matrix of rank $k$. Then $\Det_k(A)=1$ if and only if the upper $k\times k$ block of the the Hermite normal form of $A$ is the identity matrix.
\label{thrm:admissible}
\end{corollary}

\begin{proof}
Assume that the upper $k\times k$ block of the Hermite normal form is the identity. By uniqueness, this matrix is also the Smith normal form. The diagonal entries are then $1=s_i= \Det_i(A)/\Det_{i-1}(A)$, which implies that $\Det_k(A)=\Det_{k-1}(A)=\cdots =\Det_0(A)=1$.

Conversely, assume that $\Det_k(A)=1$ and let
\begin{equation}
\begin{bmatrix} S \\  0 \end{bmatrix}=UAV	
\end{equation}
be the Smith normal form of $A$, where $S=\diag(s_1,\dots,s_k)$. Consider the $n\times n$ matrix
\begin{equation} 
B = U^{-1}
\begin{bmatrix}
S & 0 \\
0 & \mathbf{I}_{n-k}
\end{bmatrix}
\begin{bmatrix}
V^{-1} & 0 \\
0 &  \mathbf{I}_{n-k}
\end{bmatrix} = \begin{bmatrix} A & E \end{bmatrix},
\end{equation}
where $E$ consists of the last $n-k$ columns of $U^{-1}$. It follows that
\begin{equation}
\det(B)=\det(U^{-1})\det(S)\det(V^{-1})=s_1\cdots s_k = \frac{\Det_1(A)}{\Det_0(A)}\cdots \frac{\Det_k(A)}{\Det_{k-1}(A)}=\Det_k(A).
\end{equation}
By assumption $\Det_k(A)=1$, and thus $B$ is invertible. Therefore
\begin{equation}
B^{-1}A=\begin{bmatrix} \mathbf{I}_k \\ 0 \end{bmatrix},
\end{equation}
and by uniqueness this must be the Hermite normal form of $A$.
\end{proof}

As mentioned after the proof of Proposition~\ref{prop1}, this corollary shows that a pair of adjacent rod structures $\v$, $\w$ is admissible if and only if $\Det_2(\v,\w)=1$. Moreover, in a similar manner, a collection of $k$ rod structures $\{\v_1, \ldots, \v_k\}$ can be sent to the standard basis $\{\e_1, \ldots , \e_k\}$, and thus forms a primitive set, if and only if $\Det_k(\v_1,\dots,\v_k)=1$. Another application of the Hermite normal form is to give a variant proof of Hollands and Yazadjiev's horizon topology theorem \cite[Theorem 2]{hollands2011}. It states that for $n\geq 2$, all closed $(n+1)$-manifolds with an effective $T^n$-action, whose quotient is not a circle, must be a product of $T^{n-2}$ and either $S^3$, a lens space $L(p,q)$, or $S^1\times S^2$. This is a generalization of a result by Orlik and Raymond for $3$-manifolds, see \cite[Section 2]{orlik1970actions}. Observe that the $(n+1)$-dimensional case can be reduced to the $3$-dimensional case by applying the transformation matrix from Lemma \ref{thrm:Hermite} to the matrix of rod structures defining the horizon, which we assume to be primitive vectors. In particular, the resulting Hermite normal form consists of the new rod structures $(1,0, \ldots, 0)$ and $(q, p, 0, \ldots, 0)$, with $0\leq q<p$. With this representation of the $T^n$-action, the last $n-2$ coordinate Killing fields clearly never vanish. Therefore the total space is homeomorphic to a product of $T^{n-2}$, and a $3$-manifold $\Sigma$ with an effective $T^2$ action. According to the possibilities given for the 3-dimensional case, we find that $\Sigma$ is either $S^3$ if $p=1$, $S^1\times S^2$ if $p=0$, or the lens space $L(p,q)$ if $p>1$.

\begin{remark}
\label{rmk:horizontop}
Given a horizon topology $\Sigma\times T^{n-2}$, it is possible to determine the topology of $\Sigma$ directly from the 2nd determinant divisor. Let $\v,\w\in \Z^n$ be primitive vectors that describe the flanking rod structures of the horizon, and compute $\Det_2(\v, \w)$. If this value is $0$, then $\v=\w$ and $\Sigma=S^1\times S^2$. If it is $1$, then the pair is admissible and $\Sigma=S^3$. If $\Det_2(\v, \w)=p>1$ then $\Sigma=L(p,q)$ for some $q<p$. Moreover, $q$ may be found from the relation $\w = q\v \mod p$.
\end{remark}


\begin{theorem}
Given any two (primitive) rod structures $\v$ and $\w$, it is always possible to find a finite number of additional rod structures that connect $\v$ to $\w$ in such a way that each corner in the resulting sequence of rods is admissible. That is, there exists a sequence of rod structures $\{\v_1,\ldots,\v_k\}$, with $\v_1=\v$ and $\v_k=\w$, having the property that $\Det_2(\v_i,\v_{i+1})=1$ for $i=1,\dots, k-1$.
\label{thrm:fillin}
\end{theorem}

\begin{proof}
By Lemma \ref{Hermitenormalform} there exists a unimodular matrix $Q$ which transforms $\v$ and $\w$ into Hermite normal form, in particular $Q\v=(1, 0, \ldots, 0)$ and $Q\w=(q, p, 0, \ldots, 0)$ where $0\leq q<p$. If $q=0$, then $p=1$ since $\w$ is primitive, and hence $\Det_2(\v,\w)=1$. So assume that $q\geq 1$. In \cite[Section 3]{khurimatsumotoweinsteinyamada} an algorithm is presented that is based on the continued fraction decomposition of $p/q$, which produces a sequence of rod structures in $\Z^2$ connecting $(1,0)$ to $(q,p)$ such that each corner is admissible. We may then append zeros to each of the rod structures in this sequence, to obtain a sequence in $\Z^n$ that connects $(1, 0, \ldots, 0)$ to $(q, p, 0, \ldots, 0)$ with the same property. Applying $Q^{-1}$ then produces the desired sequence.
\end{proof}

This result was used in \cite{khurimatsumotoweinsteinyamada}, for $(4+1)$-dimensional spacetimes, to construct simply connected fill-ins for horizons. The simple connectivity of the fill-ins preserves the fundamental group of the DOC, and is not difficult to achieve since in this low dimensional setting admissible rod structures cannot contribute to the fundamental group. In higher dimensions this is not the case, and a more careful choice of rod structures is needed to achieve simply connected fill-ins. Moreover, since the boundary between the filled in region and the DOC now has a much larger fundamental group, there is a more complicated relation between the topologies of these regions.
In the last section, we will study the fundamental group of the compactified domain of outer communication.

\renewcommand{\B}{\mathcal{B}}

\section{The Model Map}

\label{modelmap}

In this section we construct a model map $\varphi_0\from\mathbb{R}^3\setminus\Gamma\to SL(n+1,\mathbb{R})/SO(n+1)$, which describes the singular behavior of the desired harmonic map near the axis $\Gamma$, as well as the asymptotics at infinity. The model map can be viewed as an approximate solution to the singular harmonic map problem near the axes and at infinity \cite{khuriweinsteinyamada2017,weinsteinMRL}. 
 We  define a model map as follows.
 
\begin{definition}\label{modeldef}
A map $\varphi_0\from\R ^3\setminus\Gamma\to  SL(n+1,\mathbb{R})/SO(n+1)$ is a \emph{model map} if
\begin{enumerate}
\item $\lvert \tau(\varphi_0)\rvert$ is bounded, where $ \tau$ denotes  the tension of $\varphi_0$, and
\item there is a positive function function $w\in C^{2}(\mathbb{R}^3)$ with ${\Delta w\leq-\lvert \tau(\varphi_0)\rvert}$ and $w\to 0$ at infinity.
\end{enumerate}
\end{definition}

It should be noted that if $\lvert \tau(\varphi_0)\rvert = O{(r^{-\alpha})}$ as $r\rightarrow\infty$, for some $\alpha>2$, then this is sufficient to satisfy condition $(2)$.  
In order to facilitate the construction of the model map, we will utilize the following parameterization of the target space. Namely, 
the target space is parameterized by $(F,\omega)$, where $F=(f_{ij})$ is a symmetric positive definite $n\times n$ matrix and $\omega=(\omega_i)$ is an $n$-tuple corresponding to the twist potentials. On each axis rod, the Dirichlet boundary data for $\omega_i$ is constant. These so called   \emph{potential constants} determine the angular momenta of the horizons, and do not vary between adjacent axis rods which are separated by a corner. In $(F,\omega)$ coordinates, the metric on the target space $SL(n+1,\mathbb{R})/SO(n+1)$ may be expressed as (see \cite{maison1979ehlers})
\begin{equation}\label{metricx}
\frac{1}{4}\frac{df^2}{f^2}+\frac{1}{4}f^{ij}f^{kl}df_{ik}df_{jl}+\frac{1}{2}\frac{f^{ij}d\omega_i d\omega_j}{f}=\frac{1}{4}[\Tr( F^{-1}dF)]^2+\frac{1}{4}\Tr(F^{-1}dFF^{-1}dF)\\+\frac{1}{2}\frac{d\omega^tF^{-1} d\omega}{f},
\end{equation}
where $f=\det F$ and $F^{-1}=(f^{ij})$ is the inverse matrix.
By setting 
\begin{equation}
H=F^{-1}\nabla F, \quad\quad G=f^{-1}F^{-1}({\nabla\omega})^2, \quad\quad K=f^{-1}F^{-1}\nabla\omega,
\end{equation}
it follow from \eqref{harmonic1} that the squared norm of the tension becomes
\begin{equation}\label{tension}
\lvert\tau\rvert^2=\frac{1}{4}[\Tr(\dv H+G)]^2+\frac{1}{4}\Tr[(\dv H+G)(\dv H+G)]+\frac{1}{2}f (\dv K)^t F (\dv K).
\end{equation}
It is clear from (\ref{tension}) that the tension norm is invariant under the transformation 
\begin{equation}
F\mapsto hFh^t \quad  \textrm{and}  \quad\omega\mapsto h\omega,
\end{equation} 
for any $h\in SL(n,\R)$. Note that $\det h=1$ is not required for this to hold when $\omega$ is constant, since $G$ and $K$ are then zero. The next result generalizes the model map construction from lower dimensions that was presented in \cite{khuriweinsteinyamada2017,khuriweinsteinyamada2018}.

\begin{figure}\label{domain}
	\includegraphics[width=10cm]{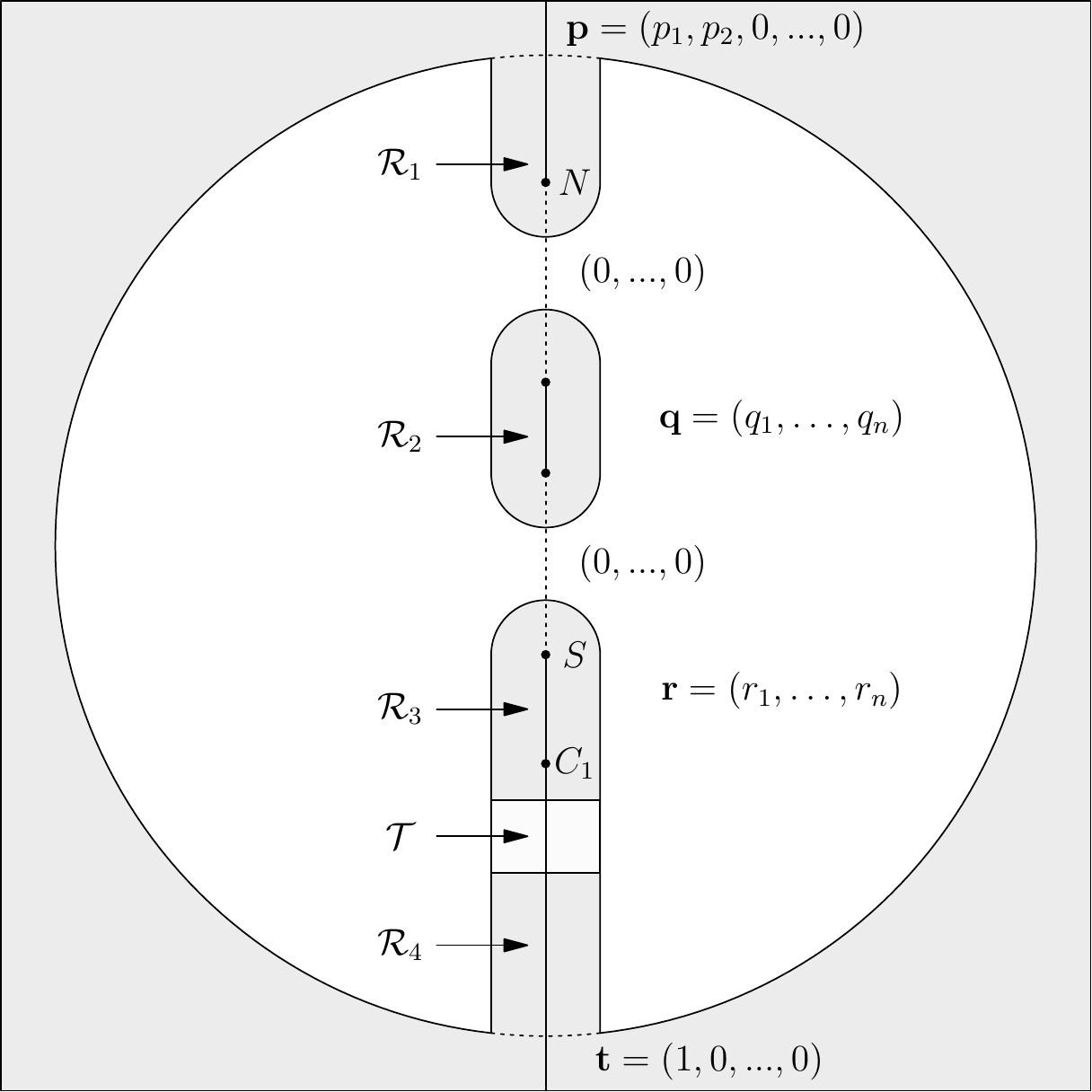}
	\caption{This diagram depicts the various regions used in the contruction of the model map. Axis rod structures are represented by $\mathbf{p}$, $\q$, $\r$, and $\t$, while horizon rods are indicated by dashed lines.}  \label{domain}
\end{figure}

\begin{lemma}\label{model}
For any admissible rod data set, with nondegenerate horizons, there exists a corresponding model map $\varphi_0\from\mathbb{R}^3\setminus\Gamma\to SL(n+1,\mathbb{R})/SO(n+1)$, for $n\geq 2$, having tension decay at infinity given by $|\tau|=O(r^{-5/2})$.
\end{lemma}
	

\begin{proof}
We first present a proof for the rod data set corresponding to two horizons and a single corner, as shown in Figure \ref{domain}. At the end of the proof, we will indicate the necessary adjustments for the general case. Observe that in the diagram there are four neighborhoods $\mathcal{R}_1$, $\mathcal{R}_2$, $\mathcal{R}_3$, and $\mathcal{R}_4$ associated with certain axis rods, having rod structures $\mathbf{p}$, $\q$, $\r$, and $\t$ respectively. The model map will be constructed separately in each of these regions. The following two harmonic functions on $\mathbb{R}^3\setminus\Gamma$ will play an important role in the construction
\begin{equation}\label{harmonicfunctn}	
u_a=\log(r_a-(z-a))=\log(2r_a \sin ^2(\theta_a/2)), \qquad v_a=\log(r_a+(z-a))=\log(2r_a \cos^2(\theta_a/2)),
\end{equation}
where $r_a=\sqrt{\rho^2+(z-a)^2}$ is the Euclidean distance from the point $z=a$ on the $z$-axis, and $\theta_a$ is the polar angle. 

Consider first the case in which the asymptotic end is modeled on $L(p,q)\times{T}^{n-2}$, where $0\leq q<p$. By applying Lemma \ref{Hermitenormalform} if necessary, it may be assumed without loss of generality that the rod structures on the semi-infinite rods are $\mathbf{p}=(p_1,p_2,0,\dots,0)$ with $p_2>0$, and $\mathbf{t}=(1,0,\dots,0)$. The model map outside of a large ball (corresponding to the shaded region outside of the circle in Figure \ref{domain}) and in the regions $\mathcal{R}_1$ and $\mathcal{R}_4$, may then be given by
\begin{equation}\label{fALE}
F_1=h\tilde{F_1}h^t,\quad \quad\quad \omega=h\tilde{\omega}(\theta),
\end{equation}
where $\tilde{\omega}$ is a function of $\theta=\theta_0$ alone described below and
\begin{equation}\label{aignq9uhg}
\tilde{F_1}= \diag{(e^{u_0-\log2},e^{v_0-\log2},1,...,1)},
\quad\quad\quad
h=\begin{pmatrix}
0 & \sqrt{p_2} &0\\
1/\sqrt{p_2} & -p_1/\sqrt{p_2} & 0\\
0 & 0 & \mathbf{I}_{n-2}
\end{pmatrix},
\end{equation}
with $\mathbf{I}_{n-2}$ representing the identity matrix. Notice that, up to multiplication by constants, $h^t$ sends $\mathbf{t}\mapsto \e_2$ and $\mathbf{p}\mapsto \e_1$. Thus, the matrix $F_1$ possesses the appropriate kernel at the semi-infinite rods to encode the given rod structures. Moreover, since $\varphi_0=(F_1,\omega)$ is obtained from the map $(\tilde F_1, \tilde{\omega})$ by applying an isometry to the target space, and $\tilde{F}_1$ arises from the canonical flat metric on $\mathbb{R}^4 \times T^{n-2}$, it follows that $\dv H=\dv F_1^{-1}\nabla F_1=0$. We may further choose $\tilde{\omega}(\theta)$ to be constant for $\theta \in [0,\epsilon]\cup [\pi-\epsilon,\pi]$, thus showing that $(F_1,\omega)$ is harmonic in $\mathcal{R}_1$ and $\mathcal{R}_4$. The constants are chosen to coincide with the prescribed potential constants on the axis rods. Within the remaining angular interval, $\tilde{\omega}(\theta)$ may be prescribed arbitrarily as long as it is smooth. In order to verify the decay of the tension for this map in the range $\theta \in  [\epsilon,\pi-\epsilon]$, observe that since $F_1=O(r)$, $f=O(r^{2})$, $|\nabla \omega| =O(r^{-1})$, and $\dv K=O(r^{-4})$ we have
\begin{equation}\label{decay4}
f (\dv K)^t F_1 (\dv K) =O(r^{-5}),\quad\quad\quad G=O(r^{-4}).
\end{equation}
Hence $\lvert \tau \rvert$ decays like $r^{-5/2}$, which is sufficient.
Similarly, in the case where the asymptotic end is modeled on $ S^2\times{T}^{n-1}$, we can without loss of generality assume that the rod structures on both the semi-infinite rods are $(1,0,\dots,0)$. The model map outside of the large ball and in the regions $\mathcal{R}_1$ and $\mathcal{R}_4$ is now given by
\begin{equation}\label{fAKK}
F_1= \diag{(e^{u},1,...,1)},\quad\quad\quad \omega=\omega(\theta),
\end{equation}
where $u=2\log \rho$ and $\omega$ is constant on $\theta \in [0,\epsilon]\cup [\pi-\epsilon,\pi]$. As before, the tension decays as $|\tau|=O(r^{-5/2})$ when $r\rightarrow\infty$.

Next consider the compact region $\mathcal{R}_2$  below the first horizon. The poles in this region are located at $z=a$ and $z=b$, $a<b$, and the rod structure is $\mathbf{q}=(q_1,q_2,\dots,q_n)$. The model map in this region is defined by
\begin{equation}
F_2=h_2\tilde{F_2} h_2^t,\quad\quad\quad \omega=c_2,
\end{equation}
where $\tilde{F_2}=\diag{(e^{u},1,...,1)}$, $u=u_a-u_b$, and
\begin{equation}
h_2=\left( \bigl[\mathbf{q},\mathbf{e}_2,\dots,\mathbf{e}_n\bigr]^t \right)^{-1}.
\end{equation}
The constant vector $c_2$ is chosen to agree with the prescribed potential constants on the rod.
As pointed out in the remark preceding the lemma, $\det h_2=1$ is not required here since $\omega$ is constant. It follows that the map $\varphi_0=(F_2,\omega)$  is harmonic in region $\mathcal{R}_2$.

Now we will deal with the regions $\mathcal{R}_3$, $\mathcal{R}_4$ and the transition region $\mathcal{T}$ between them.  Let the pole $S$ be at $z=s>0$ and the corner $C_1$ be at $z=0$. The rod structure above the corner $C_1$ is $\mathbf{r}=(r_1,\dots,r_n)$, and below the corner is $\mathbf{t}=(1,0,\ldots,0)$. Because of admissibility, we can without loss of generality assume that $r_2>0$. As above we set $\omega$ to be a constant $c_3$, agreeing with the prescribed potential constant on the rods, in the entire southern tubular neighborhoods $\mathcal{R}_3$ and $\mathcal{R}_4$.  Let
\begin{equation}
\tilde{F_3}=\diag{(	e^{u},e^{v},1,...,1)},\quad\quad u=(u_0-\log 2)-\lambda(z)(u_s -\log 2),\quad\quad v=v_0-\log 2,
\end{equation}	
where $\lambda=\lambda(z)$ is a smooth cut-off function which is 1 near $\mathcal{R}_3$ and 0 near $\mathcal{R}_4$. Define the map in region $\mathcal{R}_3$ by
\begin{equation}
F_3= h_3\tilde{F_3}h_3^t,\quad\quad\quad  \omega=c_3,
\end{equation}
where
\begin{equation}\label{airnhi2i}
h_3=\sqrt{p_2}\left(\bigl[\mathbf{r},\e_1, \e_3,\dots,\mathbf{e}_n\bigr]^t \right)^{-1}.
\end{equation}
We have already given the map in $\mathcal{R}_4$. In order to define the map in $\mathcal{T}$, set $h_3(z)$ to be a smooth curve of invertible $n\times n$ matrices which connects $h_3$ in \eqref{airnhi2i} to $h$ in \eqref{aignq9uhg}. Note that this is possible since both endpoint matrices have negative determinant, and that the curve may be chosen so that the second column of $\left(h_3(z)^t\right)^{-1}$ remains the constant vector $1/\sqrt{p_2} \e_1$. The map $F_3(z)=h_3(z)\tilde{F_3}(z)h_3^t(z)$ then identifies the correct rod structures, and agrees with the previously defined map on $\mathcal{R}_4$.
Since $\omega=c_3$, we have $G=K=0$ in $\mathcal{R}_3 \cup \mathcal{R}_4$. It remains to show that $\dv F_3^{-1}\nabla F_3$ is bounded on the transition region $\mathcal{T}$, since it vanishes on the complement. To see this, compute
\begin{align}\label{firstdvH}
\begin{split}
\dv F_3^{-1}\nabla F_3=&[\nabla(\tilde{F_3}h_3 ^t)^{-1}]\cdot (h_3 ^{-1}\nabla h_3) \tilde{F_3}h_3 ^t+(\tilde{F_3}h_3^t)^{-1} \div (h_3^{-1}\nabla h_3) \tilde{F_3}h_3^t\\
&+ (\tilde{F_3}h_3 ^t)^{-1} (h_3 ^{-1}\nabla h_3)\cdot \nabla (\tilde{F_3}h_3 ^t)+(\nabla h_3^{-t} )\cdot (\tilde{F}_3^{-1} \nabla\tilde{F}_3 )h_3^t\\
&+h_3^{-t}\operatorname{div}(\tilde{F}_3^{-1} \nabla\tilde{F}_3 )h_3^t
+h_3^{-t}(\tilde{F}_3^{-1} \nabla\tilde{F}_3)\cdot\nabla h_3^t
+\operatorname{div}(h_3^{-t}\nabla h_3).
\end{split}
\end{align}
Note that 
$|\nabla u|$ and $\partial_z v=1/r$ are clearly bounded in $\mathcal{T}$. Moreover,
the second row of $h_3^{-1}\nabla h_3$ vanishes, and this leads to the desired boundedness of $\dv F_3^{-1}\nabla F_3$.
Indeed, consider the first term on the right-hand side of \eqref{firstdvH}, namely
\begin{equation}
[\nabla(\tilde{F_3}h_3 ^t)^{-1}]\cdot (h_3 ^{-1}\nabla h_3) \tilde{F_3}h_3 ^t
=\left[\left(h_3^t \right)^{-1}\partial_z \tilde{F}_3^{-1} +\partial_z\left(h_3^t \right)^{-1} \cdot \tilde{F}_3^{-1}\right]
(h_3 ^{-1}\partial_z h_3) \tilde{F_3}h_3 ^t.
\end{equation}
The only potential difficulty in bounding this expression on $\mathcal{T}$ arises from the function $e^{-v}$, in $\tilde{F}_3^{-1}$ and $\partial_z \tilde{F}_3^{-1}$. However, since $h_3 ^{-1}\partial_z h_3$ has a vanishing second row, the products 
\begin{equation}
\tilde{F}_3^{-1} \cdot (h_3 ^{-1}\partial_z h_3),\quad\quad\quad \partial_z \tilde{F}_3^{-1} \cdot (h_3 ^{-1}\partial_z h_3),
\end{equation}
no longer contain $e^{-v}$ and the first term of \eqref{firstdvH} is controlled. The remaining terms may be handled analogously.
It follows that \eqref{firstdvH} is bounded, and hence the model map $\varphi_0=(F_3,\omega)$ has bounded tension in a tubular neighborhood of the two southern most rods. This treats the case in which the asymptotic end is modeled on $L(p,q)\times T^{n-2}$, and a similar procedure may be used in the case that the asymptotic end is modeled on $S^2 \times T^{n-1}$.

We will now address the multiple corner case. Any connected component of the axis consists of a consecutive sequence of axis rods. To construct the model map in a tubular neighborhood of such a component, first divide this region into neighborhoods centered at corners and transition regions between corners. The basic block consists of two such neighborhoods around adjacent corners $C_n$ and $C_s$, and the transition region $\mathcal T$ between them.
It suffices to illustrate the map construction in such blocks, as the full map may then be obtained by combining the individual pieces to handle any rod structure configuration.

Consider a basic block with rod structures $\mathbf{p}$, $\q$, and $\r$ on axis rods $\Gamma _1$, $\Gamma _2$, and $\Gamma_3$ respectively, moving from north to south.
Note that $\mathbf{p}$ and $\q$, as well as $\q$ and $\r$, must be linearly independent since the corners $C_n$ and $C_s$ are admissible. It follows that there is a collection of standard basis vectors $\{\e_{i_1},\dots,\e_{i_{n-2}}\}$ that complete $\{\mathbf{p},\q\}$ to a basis, and similarly for $\{\q,\r\}$. We may then form the matrices
\begin{equation}
h_{\mathbf{p},\q} = \left( \bigl[\mathbf{p}, \q, \e_{i_1},\dots,\e_{i_{n-2}} \bigr]^t \right)^{-1}, \quad\quad\quad h_{\r,\q} = \left( \bigl[\r, \q, \e_{j_1},\dots,\e_{j_{n-2}} \bigr]^t \right)^{-1}.
\end{equation}
Next define $F_0=\diag{(e^{u},e^{v},1,...,1)}$
where $u$ and $v$ are harmonic, with $e^u$ vanishing on $\Gamma_1$ and $\Gamma_3$, and $e^v$ vanishing on $\Gamma_2$. These functions may be given as the sum of logarithms of the form
\eqref{harmonicfunctn}. Then $F_0$ corresponds to the rod structures $\e_1$, $\e_2$, and $\e_1$ on $\Gamma_1$, $\Gamma_2$, and $\Gamma_3$ respectively.
Consider a smooth curve of invertible $n\times n$ matrices $h_{\mathbf{p}\mid \r,\q}(z)$ which agrees with $h_{\mathbf{p},\q}$ on $\Gamma_1$ and in a neighborhood of $C_n$, and transitions over $\mathcal{T}\subset\Gamma_2$ so that it agrees with $h_{\r,\q}$ on $\Gamma_3$ and in a neighborhood of $C_s$.
The existence of such a curve is possible since we may assume that the determinants of $h_{\mathbf{p},\q}$ and $h_{\r,\q}$ have the same sign by replacing $\r$ with $-\r$ if necessary. Moreover, the curve may be designed such that the second column of $\left(h_{\mathbf{p}\mid \r,\q}(z)^t\right)^{-1}$ is the constant vector $\q$. This implies that the second row of $h_{\mathbf{p}\mid \r,\q}^{-1} \nabla h_{\mathbf{p}\mid \r,\q}$ vanishes, so that with the help of \eqref{firstdvH} we find that $\dv F^{-1}\nabla F$ remains bounded along $\mathcal{T}$, where $F=h_{\mathbf{p}\mid \r,\q} F_0 h_{\mathbf{p}\mid \r,\q}^t$. The model map $\varphi_0=(F,\omega)$ on the basic block, with $\omega$ constant, then has bounded tension.

Lastly, it remains to treat the case of multiple blocks within an axis component. To accomplish this, take $u$ and $v$ harmonic so that $e^u$ and $e^v$ vanish in an alternating fashion on the string of axis rods. The diagonal matrix $F_0$ is then defined along the entire string. We will inductively construct the model map on basic block assemblies. As a demonstration of this, consider adding an additional rod $\Gamma_4$, with rod structure $\w$, to the sequence of three rods discussed above which we call basic block $\mathcal{B}_1$. We may view the $\Gamma_2$, $\Gamma_3$, $\Gamma_4$ string, with rod structures $\q$, $\r$, $\w$, as a basic block $\mathcal{B}_2$; 
the corner between the third and fourth rod will be denoted by $C_w$. 
The map has already been defined into a neighborhood of $\Gamma_3$, and may be extended into a neighborhood of $\Gamma_4$ as follows. Recall that the maps
\begin{equation}
F_1=h_{\mathbf{p}\mid \r, \q} F_0 h_{\mathbf{p}\mid \r,\q}^t,\quad\quad\quad\quad F_2=h_{\r, \q\mid \w}F_0 h_{\r,\q\mid \w}^t,
\end{equation}
are defined on the basic blocks $\mathcal{B}_1$ and $\mathcal{B}_2$ respectively, and identify the desired rod structures. However, they do not necessarily coincide on the overlap regions.
In order to remedy this situation, let $h_4(z)$ be a smooth curve of invertible $n\times n$ matrices connecting $h_{\r,\q}$ to $h_{\r,\w}$ with a transition over $\tilde{\mathcal{T}}\subset\Gamma_3$. This is possible since by replacing $\w$ with $-\w$ if necessary, we may assume that both endpoint matrices have determinants of the same sign. Moreover, this curve may be chosen such that the first column of $\left(h_4(z)^t \right)^{-1}$ remains the constant vector $\r$. Set $F=h_4(z) F_0 h_4(z)^t$ on $\Gamma_3$, and observe that this agrees with $F_1$ and $F_2$ near the corners $C_s$ and $C_w$, respectively, so that $F$ is naturally defined on all of $\mathcal{B}_1 \cup \mathcal{B}_2$. Since the first row of $h_4^{-1}\nabla h_4$ vanishes, we find with the aid of \eqref{firstdvH} that $\dv F^{-1}\nabla F$ remains bounded along $\Gamma_3$. The model map $\varphi_0=(F,\omega)$ on the two basic blocks, with $\omega$ constant, then has bounded tension. We may continue this process inductively to treat any number of consecutive axis rods. 
\end{proof}

\begin{remark}
\label{comp}
In \cite{khuriweinsteinyamada2018,khuriweinsteinyamada2017} an additional technical assumption on the rod structures, known as the compatibility condition, was used for the construction of the model map. The condition, which is not required for Lemma \ref{model}, states that given three adjacent rod structures with admissible corners, say $(m,n)$, $(p,q)$, and $(r,s)$, the following inequality must hold
\begin{equation}
\label{eq:compatibility}
mr(mq-np)(ps-rq)\leq 0 \text{ .}
\end{equation}
This turns out not to be a geometric condition, as it can always be achieved by a change of coordinates. To see this, first assume without loss of generality that the determinants $(mq-np)$ and $(ps-rq)$ are $1$, by possibly replacing $(p,q)$ or $(r,s)$ or both with the vector of the same length and opposite direction.
Note that this operation does not alter the isotropy subgroup prescribed by the rod structure.
Next apply the unimodular matrix 
\begin{equation}
A=\left( \begin{array}{cc} q & -p \\ -n & m \end{array}\right) 
\end{equation}
to obtain the rod structures $A\cdot \{(m,n), (p,q), (r,s)\}=\{(1,0), (0,1), (r',s')\}$, for some $r',s'\in \Z$. Then Equation \eqref{eq:compatibility} is clearly satisfied for the new set of rod structures. 
\end{remark}

\begin{remark}
Lemma \ref{model} and Remark \ref{comp} provide the proof of part $(a)$ from Theorem~\ref{theorem1}.
\end{remark}

\section{Horocyclic Coordinates and Energy Estimates}

\label{horo}
 
In this section we show how the energy estimates based on horocyclic coordinates can be generalized from the lower rank target space
setting that was treated in \cite[Section 6]{khuriweinsteinyamada2017}. The target space is now $SL(n+1,\R)/SO(n+1)$, which is a noncompact symmetric space of dimension $n(n+3)/2$ and rank $n$. For convenience we denote $G=SL(n+1,\R)$, $K=SO(n+1)$, and $\mathbf{X}=G/K$. The Iwasawa decomposition is given by $G=NAK$, where $A$ is the abelian group 
\begin{equation}
A=\{\diag(e^{\lambda_1},...,e^{\lambda_{n+1}})\mid \prod_{i=1} ^{n+1} e^{\lambda_i}=1 \},
\end{equation}
and $N$ is the nilpotent subgroup of upper triangular matrices with diagonal entries set to $1$. Thus, given $g\in G$ there are unique elements $m\in N$, $a\in A$, and $k\in K$ with $g=mak$, and the symmetric space $\mathbf{X}$ may be identified with the subgroup $NA$.
Denote $x_0=[Id]\in \mathbf{X}$ and note that the orbits $A\cdot x_0 =:\mathfrak{F}_{x_0}$ and $N\cdot x_0$ are respectively a maximal flat and a horocycle. The former is an $n$-dimensional totally geodesic submanifold with vanishing sectional curvature, and the latter is an $n(n+1)/2$-dimensional submanifold with the property that each flat which is asymptotic to the same Weyl chamber at infinity
has an orthogonal intersection with the horocycle in a single point. Furthermore, 
since each point $x \in \mathbf{X}$ may be uniquely expressed as $m a \cdot x_0$, 
the assignment $x \mapsto \mathfrak{F}_x=ma\cdot\mathfrak{F}_{x_0}$ yields a smooth foliation whose leaves are the flats $\{ m \cdot \mathfrak{F}_{x_0}\}_{m \in N}$; the flat $\mathfrak{F}_x$ orthogonally interects the horocycle $N\cdot x$ only at $x$.
In this manner, the pair $(a, m)$ gives rise to a horocyclic orthogonal coordinate system for $\mathbf{X}$.

A Euclidean coordinate system $r=(r_1,\ldots,r_n)$ may be introduced on 
$\mathfrak{F}_{x_0}$, and can then be pushed forward to each flat $m \cdot \mathcal{F}_{x_0}$ so that the horocyclic coordinates $(a, m)$ may be represented by $(r, m)$. Furthermore, each $r'$ defines a diffeomorphism (translation) $(r, m) \mapsto (r+r', m)$ that preserves the $m$-coordinates, and for each $m'\in N$ there is an isometry that preserves the $r$-coordinates $(r, m) \mapsto (r, m' m)$.
These $r$-translations map horocycles to horocylces, and therefore may be used to push forward a system of global coordinates
$\theta=(\theta^1,\ldots,\theta^{n(n+1)/2})$ on $N\cdot x_0\cong\mathbb{R}^{n(n+1)/2}$ to all horocycles. It follows that $(r,\theta)$ form a set of global coordinates on $\mathbf{X}$ in which the coordinate fields $\partial_{r_i}$ and $\partial_{\theta^j}$ are orthogonal, and such that the $G$-invariant Riemannian metric on $\mathbf{X}$ is expressed as
\begin{equation}
\mathbf{g}= dr^2+Q(d\theta,d\theta) = \sum_{i=1}^{n} dr_{i}^2  + \sum_{j,l=1}^{n(n+1)/2}Q_{jl}d \theta^{j} d\theta^{l},
\end{equation}
where the coefficients $Q_{jl}(r,\theta)$ are smooth functions. Moreover, the proof of \cite[Lemma 8]{khuriweinsteinyamada2017} generalizes in a direct manner to the current setting to yield the uniform bounds
\begin{equation}\label{aongag}
bQ(\xi,\xi)\leq\partial_{r_i}Q(\xi,\xi)\leq cQ(\xi,\xi),
\end{equation}
for all $i=1,\ldots,n$ and $\xi\in \R^{n(n+1)/2}$ where $0<b<c$. With the help of \eqref{aongag}, by expressing the harmonic map equations in the horocyclic parameterization we may establish energy bounds on compact subsets away from the axis. In particular, if $\varphi:\mathbb{R}^3 \setminus\Gamma\rightarrow \mathbf{X}$ is a harmonic map and $\Omega\subset\mathbb{R}^3 \setminus\Gamma$ is a bounded domain then the harmonic energy restricted to $\Omega$ satisfies
\begin{equation}
E_{\Omega}(\varphi)\leq \mathcal{C},
\end{equation}
where the constant $\mathcal{C}$ depends only on the maximum distance $\sup_{y\in\Omega}d_{\mathbf{X}}(\varphi(y),x_0)$.

\begin{definition}\label{asymptotic}
Two maps $\varphi_1,~\varphi_2\from\R ^3\setminus\Gamma\to \mathbf{X}$ are \emph{asymptotic} if there exists a constant $C$ such that $d_{\mathbf{X}}(\varphi_1,\varphi_2)\leq C$, and $d_{\mathbf{X}}(\varphi_1(y),\varphi_2(y))\to 0$ as $|y|\rightarrow \infty$.
\end{definition}

The distance between the model map and solutions to the harmonic map Dirichlet problem on an exhausting sequence of domains may be estimated via a maximum principle argument \cite{weinsteinMRL}, which is based on convexity of the distance function in the nonpositively curved target. This supremum bound together with the energy bound, allow for an application of standard elliptic theory to control all higher order derivatives. The sequence of harmonic maps on exhausting domains will then subconverge to the desired solution, for details see \cite[Sections 6 \& 7]{khuriweinsteinyamada2017}. We record this conclusion as the following result.

\begin{lemma}
Let $\varphi_0$ be a model map. Then there exists a unique harmonic map $\varphi:\mathbb{R}^3 \setminus \Gamma\rightarrow \mathbf{X}$ such that $\varphi$ is asymptotic to $\varphi_0$.
\end{lemma}

This lemma establishes part $(b)$ of Theorem~\ref{theorem1}. Since $\varphi$ is asymptotic to $\varphi_0$, it can be shown in the same way as \cite[Theorem 11]{khuriweinsteinyamada2017}, that the two maps respect the same rod data set. Furthermore, part $(c)$ of Theorem \ref{theorem1} may be established analogously to \cite[Section 8]{khuriweinsteinyamada2017}. This completes the proof of Theorem \ref{theorem1}.



\section{Plumbing and Topology of the Domain of Outer Communication}

\label{plumbing}


	\renewcommand{\B}{B}
	\renewcommand{\D}{D}
	\newcommand{\m}{\vec{m}}
	\newcommandx{\setoptions}[5][1=ERROR, 2=ERROR, 3=ERROR, 4=ERROR, 5=ERROR]{
		\def\optionA{#1}
		\def\optionB{#2}
		\def\optionC{#3}
		\def\optionD{#4}
		\def\optionE{#5}
		}
	\newcommand{\textoption}[1]{
		\ifnum#1=1
			\optionA \fi
		\ifnum#1=2
			\optionB \fi
		\ifnum#1=3
			\optionC \fi
		\ifnum#1=4
			\optionD \fi
		\ifnum#1=5
			\optionE \fi
		}

There are two methods that can be used to characterize the domain of outer communication. One method consists of filling in horizons and cross-sections in the asymptotic end to obtain a simply connected compact manifold.
In the next section we use this method for spatial dimensions $4$, $5$, and $6$, where a complete list of possible topologies is available.  The other approach involves breaking up the domain of outer communication into simpler pieces, and then classifying the individual components. This is the method of plumbing constructions which will be discussed in the current section, and will yield the proof of Theorem \ref{main:DOC}. Throughout this section we will assume that $n\geq 3$.

In Theorem \ref{main:DOC} the domain of outer communication is broken up into components determined by the number of corners that they contain.
The pieces which contain no corners are either the asymptotic end $M_{end}^{n+2}$, or a piece which is homeomorphic to $[0,1]\times \D^2\times T^{n-1}$ which we denote by $C^{n+2}_k$.
When a piece contains a single corner, the admissibility condition may be used to show that it is the product of a ball with a torus $\B^4\times T^{n-2}$. This part of the analysis is identical to the (spatial) $4$-dimensional case that is covered in \cite[Theorem~1]{khurimatsumotoweinsteinyamada}.
However, a significant difference occurs in higher dimensions when analyzing components that contain at least two corners.
A component with exactly two corners will turn out to be the product of a torus $T^{n-3}$ with a disk bundle over a $3$-manifold, rather than a $2$-sphere. Moreover, for components with more than two corners, we will have to define a generalization of plumbing where the fibers and base space are not of the same dimension.

\begin{theorem}
Let $M^{n+2}$ be a simple $T^n$-manifold, and consider a neighborhood $N^2$ in the orbit space of a portion of the axis with two corners and no horizon rods. The total space over $N^2$ is homeomorphic to $\xi\times T^{n-3}$, where the action of $T^n\iso T^3 \times T^{n-3}$ acts componentwise.
Here $\xi$ is a $\D^2$-bundle
over $X\in \{S^3, L(p,q), S^1\times S^2\}$.
The topologies of $X$ and $\xi$ may be read off from the Hermite normal form of the rod structures.
\label{thrm:Disk Bundles}
\end{theorem}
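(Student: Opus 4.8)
The plan is to reduce to the case $n=3$ by splitting off a free torus, and then to reconstruct the total space over $N^2$ from the two corners by a plumbing‑type gluing of two standard local models. Let $v_1,v_2,v_3\in\mathbb{Z}^n\cong\pi_1(T^n)$ be the rod structures of $R_1,R_2,R_3$, indexed so that $R_2$ is the rod lying between the two corners, so that admissibility at the two corners says that $\{v_1,v_2\}$ and $\{v_2,v_3\}$ each span a rank‑two direct summand of $\mathbb{Z}^n$. Applying an element of $\mathrm{GL}_n(\mathbb{Z})$ --- equivalently, passing to the Hermite normal form of the matrix of rod vectors --- I may assume
\[
v_1=e_1,\qquad v_2=e_2,\qquad v_3=a\,e_1+b\,e_2+p\,e_3,
\]
with $p\ge 0$, where $\gcd(a,p)=1$ if $p>0$ and $|a|=1$ if $p=0$; this last condition is exactly admissibility at the second corner. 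Since $e_4,\dots,e_n$ never appear in a rod vector, the subtorus $T^{n-3}\subset T^n$ that they generate acts freely on the entire total space $P:=\pi^{-1}(N^2)$. The first reduction is then that $P$, being a simple torus manifold over the contractible, horizon‑free orbit‑space piece $N^2$, is determined up to equivariant homeomorphism by $N^2$ together with the rod data (this is the slice‑theorem reconstruction already used for a single corner in \cite[Theorem~1]{khurimatsumotoweinsteinyamada}); since $\xi\times T^{n-3}$ realizes exactly this data for $T^n\cong T^3\times T^{n-3}$ acting componentwise, with $T^3\leftrightarrow\langle e_1,e_2,e_3\rangle$ and $\xi$ the total space over $N^2$ of the induced simple $T^3$‑manifold structure, it follows that $P\cong\xi\times T^{n-3}$. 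It remains to analyze $\xi$, so from now on $n=3$.

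For $\xi$ I would carry out the plumbing explicitly. Write $N^2$ as a union of two closed half‑disk neighbourhoods $U_1\ni p_1$ and $U_2\ni p_2$ meeting in a half‑disk $U_1\cap U_2$ that contains the midpoint of $R_2$ but neither corner, the boundary of $N^2$ consisting of the axis arc $R_1\cup\{p_1\}\cup R_2\cup\{p_2\}\cup R_3$ together with a single free boundary arc. By admissibility and the slice theorem (again the single‑corner analysis of \cite{khurimatsumotoweinsteinyamada}), $\xi|_{U_j}$ is equivariantly homeomorphic to $\B^4\times S^1$, in which the rank‑two summand spanned by the two rod vectors meeting at $p_j$ acts standardly on $\B^4$, realized as the bidisk $\D^2\times\D^2$ --- one disk factor rotated by each of those rod vectors --- and the complementary $S^1$ acts freely; over $U_1\cap U_2$ the total space is $\D^2\times T^2$ with $\D^2$ rotated by $v_2$, a neighbourhood of the core of the solid torus over $R_2$. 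Thus $\xi$ is two copies of $\B^4\times S^1$ glued equivariantly along such a region. Collapsing in each chart the disk on which $v_2$ acts exhibits $\xi$ as the total space of a $\D^2$‑bundle whose base $X$ is obtained by gluing two solid tori, one for each corner, along their common boundary torus $T^3/\langle v_2\rangle$; since the meridian of the solid torus at $p_j$ is carried by the outer rod vector ($v_1$ for $p_1$, $v_3$ for $p_2$), this is a genus‑one Heegaard splitting, so $X\in\{S^3,L(p,q),S^1\times S^2\}$. Reading off the Hermite data of $(v_1,v_2,v_3)$: the order of $\pi_1(X)$ is the absolute value of the wedge $\bar v_1\wedge\bar v_3$ of the images of $v_1,v_3$ in $\mathbb{Z}^3/\langle v_2\rangle$, which is $p$, so $X=S^1\times S^2$ when $p=0$, $X=S^3$ when $p=1$, and a lens space otherwise, while the remaining entry $b$ --- the $v_2$‑component of $v_3$ --- governs how the $\D^2$‑fibres are identified and hence pins down the Euler class of $\xi$ in $H^2(X;\mathbb{Z})$.

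The main obstacle is concealed in the word \emph{homeomorphic}: the whole argument rests on the rigidity statement that a simple $T^n$‑manifold over a contractible, horizon‑free orbit‑space piece is determined by its rod data, which is what lets one both split off $T^{n-3}$ as an honest product factor and identify $\xi$ with the explicit plumbed model. Establishing this requires an equivariant collaring and isotopy argument controlling the gluing maps between the corner charts and the rod charts --- in particular a verification that the free boundary arc and the interior of $N^2$ contribute no additional moduli --- together with a check that the prescribed equivariant gluing really does assemble into a genuine $\D^2$‑bundle, which means accounting for the non‑orientable disk bundles that can occur over $L(p,q)$ and $S^1\times S^2$ rather than only the oriented ones. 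Once this rigidity is in hand, identifying the resulting model with the claimed disk bundle, and extracting $X$ and the bundle from the Hermite normal form, are routine computations in $\mathbb{Z}$‑linear algebra and low‑dimensional topology.
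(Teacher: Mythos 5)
Your proposal follows essentially the same route as the paper: put the three rod structures into Hermite normal form $\{e_1,\,e_2,\,(q,r,p,0,\dots,0)\}$, split off the $T^{n-3}$ generated by the last $n-3$ coordinate circles, and identify $\xi$ as a $D^2$-bundle whose base is the genus-one Heegaard 3-manifold carried by the closed middle rod, with the fiber disks rotated by the middle rod structure. Two remarks on the points you flag. The ``main obstacle'' you isolate --- that a simple $T^n$-manifold over such a horizon-free orbit-space piece is determined up to equivariant homeomorphism by its rod data --- is part of the standing framework for simple $T^n$-manifolds that the paper also invokes rather than re-proves (it is what makes Remark \ref{rmk:simple} and the single-corner model usable in the first place), so it is not where the content of this theorem lies. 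Your worry about non-orientable disk bundles is moot: a circle subgroup of $T^3$ rotates the fibers, so the structure group reduces to $SO(2)$ and only orientable bundles can occur.

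The genuine gap is in the step you dismiss as ``routine computations'': reading off the topology of $\xi$ itself, not just of $X$. Saying that $b$ (the paper's $r$) ``governs how the $D^2$-fibres are identified and hence pins down the Euler class'' is an assertion, not an argument. The paper proves it by classifying $D^2$- (equivalently $SO(2)$-) bundles over $X$ via homotopy classes of maps to $K(\mathbb{Z},2)$, i.e.\ by $H^2(L(p,q);\mathbb{Z})\cong\mathbb{Z}_p$; using uniqueness of the Hermite normal form to see that $r\in[0,p)$ is an invariant of the equivariant homeomorphism class; observing that every class in $H^2$ is realized by a disk bundle admitting an effective $T^3$-action and hence by a rod diagram of this shape, which gives the bijection $r\leftrightarrow e(\xi)$; and fixing the normalization by exhibiting the trivial bundle $L(p,q)\times D^2$ as the product diagram $\{(1,0,0),(0,1,0),(q,0,p)\}$, so that $r=0$ corresponds to Euler class zero. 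Similarly, in the degenerate case $p=0$ the paper identifies $\xi$ by factoring out the free circle and reducing to the 4-dimensional diagram $\{(1,0),(0,1),(1,r)\}$, a disk bundle over $S^2$ with Euler number $r$, whence $X=S^1\times S^2$; your sketch does not treat this case's bundle data at all. Without some version of these arguments the final clause of the theorem --- that the topology of $\xi$ may be read off from the Hermite normal form --- remains unproved in your write-up.
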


\begin{proof}
The rod diagram of $N^2$ has three axis rods separated by two admissible corners. Using Remark \ref{rmk:simple} we can, without changing the topology, transform our rod structures into the form of Equation \eqref{eq:qrp}, where the last $n-3$ entries of each rod structure are zero. The last $n-3$ Killing fields then do not vanish over $N^2$, and hence the total space is a product manifold $\xi\times T^{n-3}$, where the $T^n$-action splits naturally into $T^3$ acting on $\xi$ and $T^{n-3}$ acting on itself. Here $\xi$ denotes the manifold represented by the rod diagram $\{(1,0,0), (0,1,0), (q,r,p)\}$ with $0\leq q<p$, $0\leq r<p$, and $\mathrm{gcd}\{q,p\}=1$ if the vectors are linearly independent. In the case that they are linearly dependent, we instead have $q=1$, $p=0$, and $r\in\mathbb{Z}$.

The middle axis rod, where the second Killing field vanishes, is a deformation retract of the space $\xi$. This rod represents a closed manifold $X\in \{S^3, L(p,q), S^1\times S^2\}$. Fibers over this space correspond to rays extending out from the middle axis rod, see Figure \ref{fig:plumbing2}. Each point in the interior of the middle axis rod corresponds to an entire $T^2$, while a ray terminating at that point corresponds to $\D^2\times T^2$. Moreover, each of the two corners corresponds to an $S^1$ in the base space $X$, while the adjacent axis rods correspond to $\D^2\times S^1$. It follows that $\xi$ has the structure of a $\D^2$-bundle over $X$.


To determine the topology of $X$ and $\xi$, we look at the rod structures. If they are linearly dependent, then by admissibility the rod structures must be $\{(1,0,0), (0, 1, 0), (1, r, 0)\}$. There is then a free $S^1$ action, and after factoring this out, it remains to analyze the 4-dimensional disk bundle generated by the diagram with rod structures $\{(1,0), (0, 1), (1, r)\}$. The base space of this latter disc bundle is $S^2$, and its zero-section self-intersection number, or equivalently the characteristic number of its Euler class is $r$, see~\cite{khurimatsumotoweinsteinyamada}. Moreover, we have $X=S^1\times S^2$.

If the rod structures $\{(1,0,0), (0, 1, 0), (q, r, p)\}$ are linearly independent, the base space $X=L(p,q)$. Recall that $L(1,q)=S^3$ for all $q$. The number of distinct disk bundles, or equivalently $SO(2)$-bundles, over $X$ is determined by the homotopy classes of maps $[X,\CP^\infty]$. Moreover, the classifying space $BS^1=\CP^\infty$ is an Eilenberg-Maclane space of type $K(\Z, 2)$, so the homotopy classes of based maps from $X$ to $K(\Z,2)$ is in bijection with $H^2(X; \Z)\iso\Z_p$. The element of this cohomology group which corresponds to a specific bundle $\xi$ is called the \emph{Euler class} $e(\xi)$. 

By uniqueness of the Hermite normal form, the $r\in \Z_p\iso H^2(L(p,q);\Z)$ in the rod structure is uniquely determined for each equivariant homeomorphism class of $\xi$. Conversely, for each class in $H^2(L(p,q);\Z)$ there is a unique disk bundle over $L(p,q)$. Each of these disk bundles admits an effective $T^3$ action, with $T^1$ acting on the fibers, and a $T^2$ acting on the base $L(p,q)$. Thus, to each of these disk bundles corresponds a rod diagram with three axis rods and two admissible corners. This gives a one-to-one correspondence between integers $r\in[0,p)$ and $e(\xi)\in H^2(L(p,q), \Z)$. Furthermore, for the trivial disk bundle $L(p,q)\times \D^2$ both $r=0$ and $e(\xi)=0$. To see this, note that the quotient of $L(p,q)$ by its $T^2$-action can be represented as an interval where the $(1,0)$ and the $(q,p)$ circles degenerate at the end points. Similarly, the quotient of $\D^2$ by $S^1$ can be represented by a half open interval where the circle degenerates at the one end point. Taking the product of these two spaces produces the rod diagram $\{ (1,0,0), (0, 1, 0), (q, 0, p)\}$, from which we deduce that $r=0$.
\end{proof}

The above theorem shows that the total space over a neighborhood of three consecutive axis rod structures $\{\u, \v, \w\}$, satisfying the admissiblity condition, is $\xi\times T^{n-3}$ where $\xi$ is a disk bundle over either a lens space or a ring. Observe that there is a subtorus $T^3$ which leaves the slices $\xi\times \{\boldsymbol\varphi\}\in \xi\times T^{n-3}$ invariant, and is spanned by the rod structures $\{\u, \v, \w\}\sub\Z^n$ as follows
\begin{equation}
\label{eq:T3}
T^3 \iso \operatorname{span}_\R \{\u, \v, \w\} /\Z^n 
\sub \R^n / \Z^n
\iso T^n .
\end{equation}
Although $\{\u, \v, \w\}$ may not necessarily be a primitive set, this can be rectified by employing an integral version of the Gram-Schmidt process, which will lead to the formulation of generalized plumbing.

\begin{lemma}
\label{thrm:preplumb}
Let $\{\u, \v, \w\}\sub \Z^n$ be a consecutive sequence of rod structures satisfying the admissibility condition, and with a neighborhood that lifts to $\xi\times T^{n-3}$ in the total space. If $\xi$ is a $D^2$-bundle over $L(p,q)$, $0\leq q<p$ with Euler class determined by $r\in[0,p)$, then there exists a unique primitive vector $\p\in \Z^n$ satisfying
\begin{equation}
\label{eq:preplumb}
\w = q \u + r \v + p \p .
\end{equation}
Furthermore, $\{\u, \v, \p\}\sub \Z^n$ forms a primitive set. In addition, if $\xi$ is a $D^2$-bundle over $S^1\times S^2$, then Equation \eqref{eq:preplumb} is satisfied with $\p=0$.
\end{lemma}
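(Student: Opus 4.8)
The plan is to show that, after the normalization used in the proof of Theorem~\ref{thrm:Disk Bundles}, the vector $\p$ is nothing but the third standard basis vector, from which all three assertions are immediate. Concretely, I would apply Remark~\ref{rmk:simple} to the consecutive admissible triple $\{\u,\v,\w\}$, exactly as in that proof: there is a lattice automorphism $A\in GL(n,\Z)$ with $A\u=e_1$, $A\v=e_2$, and $A\w=(q,r,p,0,\dots,0)$, the form of Equation~\eqref{eq:qrp}. By Theorem~\ref{thrm:Disk Bundles} the integers occurring in this normal form are precisely the data recording the base $X$ and the Euler class of $\xi$, so the $p,q,r$ read off here agree with those in the hypothesis.

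Suppose first that $\xi$ is a $\D^2$-bundle over $L(p,q)$, so that $p\geq 1$ (the case $p=1$ being $L(1,q)=S^3$). Set $\p:=A^{-1}e_3$. Since $A^{-1}\in GL(n,\Z)$, the vector $\p$ lies in $\Z^n$ and is primitive. Applying $A^{-1}$ to the identity $(q,r,p,0,\dots,0)=qe_1+re_2+pe_3$ gives $\w=q\u+r\v+p\p$, which is Equation~\eqref{eq:preplumb}; moreover any other solution $\p'$ of that equation satisfies $p(\p-\p')=0$, hence $\p'=\p$ because $p\geq 1$. For the primitivity of the set $\{\u,\v,\p\}$, observe that it equals $A^{-1}\{e_1,e_2,e_3\}$; since $\{e_1,e_2,e_3\}$ extends to the standard basis of $\Z^n$ and $A^{-1}$ carries $\Z$-bases to $\Z$-bases, $\{\u,\v,\p\}$ extends to a $\Z$-basis of $\Z^n$ and is therefore a primitive set.

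When instead $\xi$ is a $\D^2$-bundle over $S^1\times S^2$, Theorem~\ref{thrm:Disk Bundles} gives the normal form $\{e_1,e_2,e_1+re_2\}$ with $p=0$, so that $\w=\u+r\v$ and Equation~\eqref{eq:preplumb} holds with $\p=0$ (here $\p$ is of course not unique, since $p=0$, and the lemma asserts only that $\p=0$ works). The only step that requires genuine care is the identification in the first paragraph: the triple $p,q,r$ in the statement is defined through the topology of $\xi$ and its Euler class, and matching it to the Hermite normal form of the rod structures is the substance of Theorem~\ref{thrm:Disk Bundles} together with uniqueness of the Hermite normal form. Granting that, everything else is a one-line consequence of the fact that $GL(n,\Z)$ preserves integrality, primitivity, and the property of a subset extending to a basis.
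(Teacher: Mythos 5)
Your proposal is correct and follows essentially the same route as the paper: pass to Hermite normal form via a unimodular matrix, set $\p$ equal to the preimage of $\e_3$, and read off existence, uniqueness, and primitivity from unimodularity. The only cosmetic differences are that you verify uniqueness by cancelling $p$ directly and check that $\{\u,\v,\p\}$ is a primitive set by noting it is the image of $\{\e_1,\e_2,\e_3\}$ under a unimodular map, whereas the paper appeals to uniqueness of the Hermite normal form and to the identity $\Det_3(\u,\v,\p)=1$ via multilinearity; both are equivalent and equally valid.
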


\begin{proof}
First consider the case in which $\xi$ is a $D^2$-bundle over $L(p,q)$, $0\leq q<p$ with Euler class determined by $r\in[0,p)$. Let $Q$ be the unimodular matrix that transforms $\{\u, \v, \w\}$ into Hermite normal form, that is $Q\u=\e_1$, $Q\v = \e_2$, and $Q \w= q \e_1 +  r \e_2 + p\e_3$. We may then set $\p=Q^{-1}\e_3$ and observe that \eqref{eq:preplumb} is satisfied. Since the Hermite normal form is unique, and $p\neq 0$, it is clear that $\p\in\mathbb{Z}^n$ is the unique solution to the equation. Furthermore, since $Q^{-1}$ is unimodular and $\e_3$ is a primitive vector we find that $\p$ is primitive as well. Next note that $\{\u, \v, \p\}$ is a primitive set if and only if $\Det_3(\u, \v, \p)=1$. Moreover, by multilinearity of the determinant together with Equation \eqref{eq:preplumb}, it follows that
\begin{equation}
\Det_3(\u,\v,\p) = p^{-1} \Det_3(\u, \v, \w) = p^{-1}\Det_3(\e_1, \e_2, q\e_1 +r\e_2+p\e_3)=1,
\end{equation}
where the second equality follows from the coordinate invariance of $\Det_3$. Lastly, if $\xi$ is a $D^2$-bundle over $S^1\times S^2$, then $q=1$ and $p=0$ so that \eqref{eq:preplumb} is satisfied with $\p=0$.
\end{proof}

We will now consider portions of the axis having more than two consecutive corners in a simple $T^n$-manifold. The total space over neighborhoods of these regions of the axis, with $l+1$ corners, will be shown to consist of $l$ disk bundle-torus products that are glued together in a fashion that may be viewed as a generalization of the linear plumbing construction. This higher dimensional plumbing, which we will refer to as \emph{toric plumbing}, is not a straightforward generalization of 4-dimensional procedure due to the various ways that the extra toroidal dimensions may be conjoined. For each pair of neighboring disk bundles we will define a \emph{plumbing vector}, which distinguishes the different ways that the two disk bundles can be plumbed together. Figure \ref{fig:badplumb} provides examples of the same two disk bundles being plumbed together in different ways to form non-homeomorphic total spaces.

Consider a section of the axis rod, having admissible corners, with rod structures $\{\v_1, \ldots, \v_{l+2}\}$. From Theorem \ref{thrm:Disk Bundles}, a neighborhood of each consecutive triple of rod structures $\{\v_{i}, \v_{i+1}, \v_{i+2}\}$ lifts to the total space as a product $\txi_i \iso \xi_i \times T^{n-3} \sub M^{n+2}$, where $\xi_i$ is a disk bundle with Euler class determined by $r_i$ over either $L(p_i, q_i)$, or $S^1\times S^2$ if $p_i=0$. With the aid of a unimodular transformation matrix $Q$, we can arrange the rod structures into Hermite normal form $\{\w_1, \ldots, \w_{l+2}\}$ so that $Q\v_i = \w_i$. Recall that the $\w_i$ are uniquely determined, although $Q$ may not have this property. By Remark \ref{rmk:simple}, the first three elements are given by $\w_1=\e_1$, $\w_2=\e_2$, and $\w_3 = (q_1, r_1, p_1, 0, \ldots, 0)$. For each $i$ such that $p_i\ne 0$, Lemma~\ref{thrm:preplumb} ensures the existence of a unique primitive vector $\p_i \in\mathbb{Z}^n$ satisfying
\begin{equation}
\label{eq:plumb}
\w_{i+2} = q_i \w_i + r_i \w_{i+1} + p_i \p_i.
\end{equation}
When $p_i=0$ we define $\p_i=\boldsymbol 0$, and~\eqref{eq:plumb} is trivially satisfied.

\begin{definition}
\label{def:plumb}
The vectors $\p_i$ satisfying~\eqref{eq:plumb} are referred to as \emph{plumbing vectors}.
\end{definition}

\begin{remark}
\label{rmk:geometric}
\sloppy
If $\bar{Q}$ is a unimodular matrix, then $\{\v_1, \ldots, \v_{l+2}\}$ and $\{\bar{Q} \v_1, \ldots, \bar{Q} \v_{l+2}\}$ have the same Hermite normal form and thus the same plumbing vectors. Therefore, plumbing vectors do not depend on the choice of coordinates, but rather depend only on the toric structure of the total space.
\end{remark}

While the set of plumbing vectors is uniquely determined by a set of rod structures, they are not uniquely determined by the topologies of $\xi_i$.
In Figure \ref{fig:badplumb}, we present two pairs of examples in which the same disk bundles are being plumbed with different plumbing vectors. From Remark \ref{rmk:geometric} we know that the total spaces will have different toric structures, and will not simply differ by a change of coordinates. Furthermore, in these examples the boundaries of the total spaces have different fundamental groups. Thus, plumbing vectors can affect the topology of the total space.

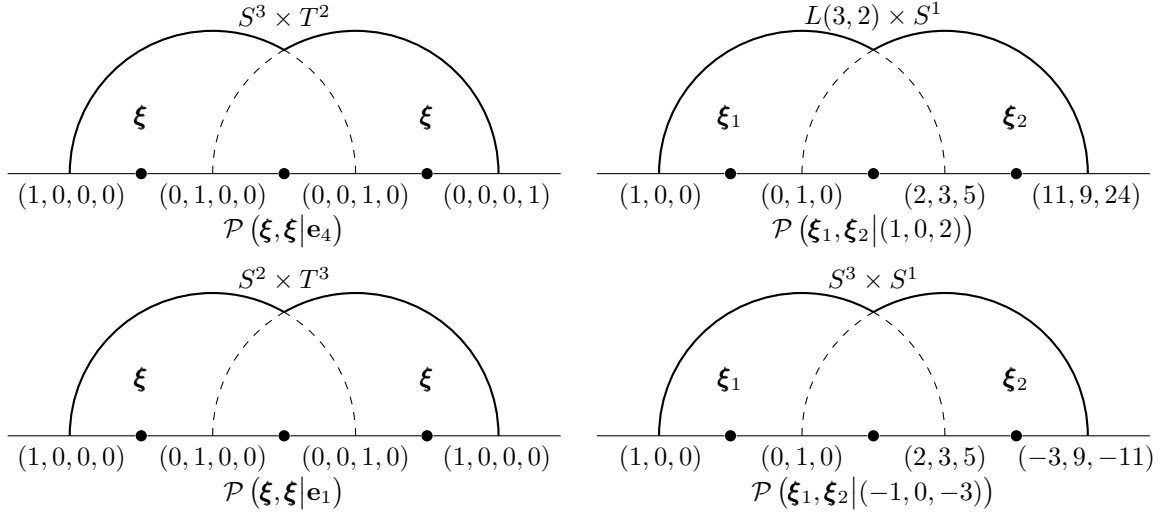
\begin{figure}\centering
\begin{tikzpicture}
\setoptions[(1,0,0,0)][(0,1,0,0)][(0,0,1,0)][(0,0,0,1)]

\tikzmath{\stepsize = 1.9; \raynum = 10;}

\foreach \n in {1,2,3}
\node at ( \n * \stepsize, 0) [circle,fill,inner sep=1.5pt] (\n) {};
\node at (0,0) (0) {};
\node at (4*\stepsize, 0) (4) {};

\foreach \n/\m in {0/1,1/2,2/3,3/4}
\draw (\n) -- node[pos =.5, below] {$\textoption{\m}$} (\m);

\draw[thick] (\stepsize*.5, 0) arc (180:60:\stepsize);
\draw[thick] (\stepsize*3.5, 0) arc (0:120:\stepsize);

\draw[dashed] (\stepsize*2.5, 0) arc (0:60:\stepsize);
\draw[dashed] (\stepsize*1.5, 0) arc (180:120:\stepsize);

\node at (2*\stepsize, -.4*\stepsize) {$\plumb{\txi, \txi\big| \e_4}$};
\node at (2*\stepsize, 1.1*\stepsize) {$S^3\times T^2$};
\node at (\stepsize, .4*\stepsize) {$\txi$};
\node at (3*\stepsize, .4*\stepsize) {$\txi$};
\end{tikzpicture}%
\begin{tikzpicture}
\setoptions[(1,0,0)][(0,1,0)][(2,3,5)][(11,9,24)]

\tikzmath{\stepsize = 1.9; \raynum = 10;}

\foreach \n in {1,2,3}
\node at ( \n * \stepsize, 0) [circle,fill,inner sep=1.5pt] (\n) {};
\node at (0,0) (0) {};
\node at (4*\stepsize, 0) (4) {};

\foreach \n/\m in {0/1,1/2,2/3,3/4}
\draw (\n) -- node[pos =.5, below] {$\textoption{\m}$} (\m);

\draw[thick] (\stepsize*.5, 0) arc (180:60:\stepsize);
\draw[thick] (\stepsize*3.5, 0) arc (0:120:\stepsize);

\draw[dashed] (\stepsize*2.5, 0) arc (0:60:\stepsize);
\draw[dashed] (\stepsize*1.5, 0) arc (180:120:\stepsize);

\node at (2*\stepsize, -.4*\stepsize) {$\plumb{\txi_1, \txi_2\big| (1,0,2)}$};

\node at (2*\stepsize, 1.1*\stepsize) {$L(3,2)\times S^1$};
\node at (\stepsize, .4*\stepsize) {$\txi_1$};
\node at (3*\stepsize, .4*\stepsize) {$\txi_2$};
\end{tikzpicture}

\begin{tikzpicture}
\setoptions[(1,0,0,0)][(0,1,0,0)][(0,0,1,0)][(1,0,0,0)]

\tikzmath{\stepsize = 1.9; \raynum = 10;}

\foreach \n in {1,2,3}
\node at ( \n * \stepsize, 0) [circle,fill,inner sep=1.5pt] (\n) {};
\node at (0,0) (0) {};
\node at (4*\stepsize, 0) (4) {};

\foreach \n/\m in {0/1,1/2,2/3,3/4}
\draw (\n) -- node[pos =.5, below] {$\textoption{\m}$} (\m);

\draw[thick] (\stepsize*.5, 0) arc (180:60:\stepsize);
\draw[thick] (\stepsize*3.5, 0) arc (0:120:\stepsize);

\draw[dashed] (\stepsize*2.5, 0) arc (0:60:\stepsize);
\draw[dashed] (\stepsize*1.5, 0) arc (180:120:\stepsize);

\node at (2*\stepsize, -.4*\stepsize) {$\plumb{\txi, \txi\big| \e_1}$};
\node at (2*\stepsize, 1.1*\stepsize) {$S^2\times T^3$};
\node at (\stepsize, .4*\stepsize) {$\txi$};
\node at (3*\stepsize, .4*\stepsize) {$\txi$};
\end{tikzpicture}%
\begin{tikzpicture}
\setoptions[(1,0,0)][(0,1,0)][(2,3,5)][(-3,9,-11)]

\tikzmath{\stepsize = 1.9; \raynum = 10;}

\foreach \n in {1,2,3}
\node at ( \n * \stepsize, 0) [circle,fill,inner sep=1.5pt] (\n) {};
\node at (0,0) (0) {};
\node at (4*\stepsize, 0) (4) {};

\foreach \n/\m in {0/1,1/2,2/3,3/4}
\draw (\n) -- node[pos =.5, below] {$\textoption{\m}$} (\m);

\draw[thick] (\stepsize*.5, 0) arc (180:60:\stepsize);
\draw[thick] (\stepsize*3.5, 0) arc (0:120:\stepsize);

\draw[dashed] (\stepsize*2.5, 0) arc (0:60:\stepsize);
\draw[dashed] (\stepsize*1.5, 0) arc (180:120:\stepsize);

\node at (2*\stepsize, -.4*\stepsize) {$\plumb{\txi_1, \txi_2\big| (-1,0,-3)}$};
\node at (2*\stepsize, 1.1*\stepsize) {$S^3\times S^1$};
\node at (\stepsize, .4*\stepsize) {$\txi_1$};
\node at (3*\stepsize, .4*\stepsize) {$\txi_2$};
\end{tikzpicture}
\caption{The left two examples represent different toric plumbings of the trivial bundle $\txi=S^3 \times \D^2\times S^1$ with itself. In the top left example the plumbing vector is $\p_2=\e_4$, while in the bottom left example the plumbing vector is $\p_2=\e_1$. The right two examples are different toric plumbings of $\txi_1$ over $L(5,2)$ with Euler class determined by $3$, and $\txi_2$ over $L(7,3)$ with Euler class determined by $2$. The plumbing vector for the top right example is $\p_2=(1,0,2)$, while the plumbing vector for the bottom right example is $\p_2 =(-1, 0, -3)$. We can see that for each pair the topology and toric structure of the total space is different, as a consequence of having different plumbing vectors. The notation $\plumb{\txi_1,\txi_2,\p}$ refers to the toric plumbing of $\txi_1$ and $\txi_2$ with plumbing vector $\p$, as given in Definition~\ref{def:plumbing}.}
\label{fig:badplumb}
\end{figure}

\begin{subequations}
\label{eq:plumbrelations}
Plumbing vectors satisfy a number of relations, the first of which is the collection of recursion equations that are used in the definition
\begin{equation}
\label{eq:recursion}
\begin{gathered}
\w_1=\e_1 \text{, }	\w_2 = \e_2 \text{, }\\
\w_{i+2} =  q_i \w_i + r_i \w_{i+1} + p_i \p_i \text{ if } p_i\not=0 \text{, and }\\
\p_i=0 \text{ if } p_i=0 \text{, }
\end{gathered}
\end{equation}
for $i=1, \ldots, l$.  The next two conditions arise from are admissibility of the corners, and primitivity of the triples containing the plumbing vector. More precisely, adjacent rods $\{\w_{i+1}, \w_{i+2}\}$ are assumed to have an admissible corner, that is $\Det_2(\w_{i+1}, \w_{i+2})=1$. By using the recursion relations and the multilinearity of determinants, this can be re-expressed as
\begin{equation}
\label{eq:admissible w}
\Det_2 ( \w_{i+1}, q_i \w_i + p_i \p_i) = 1.
\end{equation}
Furthermore, the primitivity condition that is guaranteed by Lemma~\ref{thrm:preplumb} asserts that
\begin{equation}
\label{eq:primitivity}
\Det_3(\w_i, \w_{i+1}, \p_i)=1,
\end{equation}
when $\p_i\not=0$. If $\p_i=0$ then this condition does not apply.
Finally, we obtain two conditions from the fact that $\{\w_0, \ldots, \w_{l+2}\}$ is in Hermite normal form. The first describes conditions under which certain entiees must vanish. That is, if $\pc_{i j}=0$ for all $j\geq m$ and $1\leq i<k$, where $\p_i = (\pc_{i 1}, \ldots, \pc_{i n})$, then
\begin{equation}
\label{eq:zeros}
\pc_{k j}=0\quad\text{ for all }\quad j>m.
\end{equation}
The second condition indirectly restricts the size of certain components in the plumbing vectors.
Write $\w_i=(w_{i1},\ldots, w_{in})$, and denote the last nonzero entry of $\p_k$ by $\pc_{k m_k}$. If $\pc_{i m_k}=0$ for all $1\leq i<k$, then $w_{(k+2)m_k}$ is a pivot in the Hermite normal form so that
\begin{equation}
\label{eq:pivot}
0\leq w_{(k+2)j}< w_{(k+2)m_k} \quad\text{ for all }\quad j<m_k.
\end{equation}
These relations will be collectively referred to as the \emph{plumbing relations}.
\end{subequations}

The first plumbing vector $\p_1$ takes a simple form in all cases, depending only on whether $p_1$ vanishes.
Namely, if the base space of $\xi_1$ is $S^1\times S^2$ then $p_1=0$, and we have $\p_1=0$.
If $p_1\not=0$ then note that Remark \ref{rmk:simple} implies $\w_3=(q_1,r_1,p_1,0,\ldots,0)$. This immediately shows that
$\p_1=\e_3$ solves Equation \eqref{eq:recursion}, and by uniqueness of plumbing vectors it follows that $\p_1$ must take this
form. In what follows, since $\p_1$ is determined only by the topology of $\xi_1$ and not by plumbing information, we do not include it when describing the toric plumbing of $\xi_1$ and $\xi_2$. Thus, only $l-1$ plumbing vectors are needed to describe the gluing for a string of $l+2$ rod structures.

\begin{prop}
\label{thrm:wp}
There is a one-to-one correspondence between collections of admissible rod structures $\{\w_1, \ldots, \w_{l+2}\}\sub \Z^n$ in Hermite normal form, and collections of bundles $\{\txi_1, \ldots, \txi_l\}$
paired with a set of primitive vectors $\{\p_2, \ldots, \p_l\}\sub \Z^n$ satisfying Equations \eqref{eq:plumbrelations}.
\end{prop}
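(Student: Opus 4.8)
The plan is to present the bijection as an explicit pair of mutually inverse maps, one assembled from Theorem~\ref{thrm:Disk Bundles} and Lemma~\ref{thrm:preplumb} and the other from the recursion~\eqref{eq:recursion}, and then to check that each map is well defined and that the two compose to the identity in both orders.

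First I would construct the map from rod structures to bundle-plus-plumbing-vector data. Given $\{\w_1,\ldots,\w_{l+2}\}$ in Hermite normal form, Theorem~\ref{thrm:Disk Bundles} assigns to each consecutive triple $\{\w_i,\w_{i+1},\w_{i+2}\}$ (after the further unimodular change bringing that triple to normal form) a bundle $\txi_i$, which we record via the integers $q_i,r_i,p_i$, equivalently via its equivariant homeomorphism type. Lemma~\ref{thrm:preplumb} then produces the unique primitive plumbing vector $\p_i$ with $\w_{i+2}=q_i\w_i+r_i\w_{i+1}+p_i\p_i$ when $p_i\ne 0$, and $\p_i=\boldsymbol 0$ when $p_i=0$. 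Since $\p_1$ equals $\e_3$ or $\boldsymbol 0$ and is thus determined by $\txi_1$ alone, I discard it and retain $(\{\txi_i\},\{\p_2,\ldots,\p_l\})$. That this data satisfies the plumbing relations~\eqref{eq:plumbrelations} is precisely the content of the derivation given above, so the map lands in the asserted target set.

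Next I would construct the inverse map. Given bundles $\{\txi_1,\ldots,\txi_l\}$ with associated normalized triples $(q_i,r_i,p_i)$ and primitive vectors $\{\p_2,\ldots,\p_l\}$ satisfying~\eqref{eq:plumbrelations}, set $\w_1=\e_1$, $\w_2=\e_2$, $\w_3=(q_1,r_1,p_1,0,\ldots,0)$, and then define $\w_{i+2}=q_i\w_i+r_i\w_{i+1}+p_i\p_i$ via~\eqref{eq:recursion} for $i=1,\ldots,l$. These are integer vectors, and admissibility $\Det_2(\w_{i+1},\w_{i+2})=1$ of each corner is exactly~\eqref{eq:admissible w}, which holds by hypothesis. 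The substantive point is to show that $\{\w_1,\ldots,\w_{l+2}\}$ is genuinely in Hermite normal form; I would argue this by induction on $i$, using~\eqref{eq:zeros} to force the entries of $\w_{i+2}$ lying to the right of its pivot to vanish and~\eqref{eq:pivot} to bound the entries to its left, with the normalizations $0\le q_i<p_i$ and $0\le r_i<p_i$ from Theorem~\ref{thrm:Disk Bundles} supplying the base cases $\w_1,\w_2,\w_3$, and the degenerate steps $p_i=0$ — where $\w_{i+2}$ is a $\Z$-combination of $\w_i$ and $\w_{i+1}$ contributing no new pivot — treated separately. This Hermite normal form verification is the step I expect to be the main obstacle, since it requires controlling the echelon and pivot pattern across the entire recursion rather than one triple at a time.

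Finally I would verify that the two maps compose to the identity in both directions. Applying the inverse map and then the forward map returns the input: the recursion~\eqref{eq:recursion} rebuilding the $\w_i$ is the very relation~\eqref{eq:plumb} defining the plumbing vectors, and by uniqueness of the Hermite normal form the triple $\{\w_i,\w_{i+1},\w_{i+2}\}$ so obtained has the prescribed normal form and hence recovers $\txi_i$ through Theorem~\ref{thrm:Disk Bundles}. Conversely, starting from rod structures in Hermite normal form, Lemma~\ref{thrm:preplumb} guarantees the $\p_i$ are the unique solutions of~\eqref{eq:plumb}, so re-running the recursion reproduces the original $\w_i$ verbatim. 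This will complete the argument.
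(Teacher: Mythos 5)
Your proposal is correct and follows essentially the same route as the paper's proof: the forward map via Theorem~\ref{thrm:Disk Bundles} and Lemma~\ref{thrm:preplumb}, the inverse via the recursion~\eqref{eq:recursion}, with admissibility read off from~\eqref{eq:admissible w} and the Hermite normal form verified from the pivot and vanishing conditions (the paper delegates this last step to Lemma~\ref{thrm:Hermite}, which is exactly the inductive pivot-pattern check you outline). Your explicit check that the two maps are mutually inverse is left implicit in the paper via uniqueness of the Hermite normal form and of the plumbing vectors, but it is the same argument.
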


\begin{proof}
Let $\{\w_1, \ldots, \w_{l+2}\}\sub \Z^n$ be a collection of admissible rod structures in Hermite normal. The proof of Theorem \ref{thrm:Disk Bundles} shows that from each successive triple $\{\w_i, \w_{i+1}, \w_{i+2}\}$, there is a unique bundle $\txi_i$ which is the lift of a (orbit space) neighborhood of these three rods to the total space $M^{n+2}$. The rod structures also give the integers $q_i$, $r_i$, and $p_i$ used in Definition~\ref{def:plumb} to obtain the plumbing vectors $\p_i$. By construction, together with the admissiblity condition, these vectors satisfy the full set of plumbing relations \eqref{eq:plumbrelations}.

Conversely, let $\{\txi_1, \ldots, \txi_l\}$ be a collection of bundles
and let $\{\p_2, \ldots, \p_l\}\sub \Z^n$ be a collection of vectors satisfying Equations \eqref{eq:plumbrelations}. According to the discussion preceding this proposition, we may append to this list $\p_1=0$ if the base of $\txi_1$ is $S^1 \times S^2$, or $\p_1=\e_3$ if the base of $\txi_1$ is a lens space. Equation \eqref{eq:recursion} then uniquely determines
the rod structures $\{\w_1, \ldots, \w_{l+2}\}$, since the integers $q_i$, $r_i$, and $p_i$ are uniquely defined by each $\txi_i$ as in the proof of Theorem \ref{thrm:Disk Bundles}.
By hypothesis, the vectors $\{\w_1, \ldots, \w_{l+2}\}$ satisfy \eqref{eq:admissible w} which can be rewritten as $\Det_2 (\w_{i+1}, \w_{i+2})=1$, thus establishing admissibility. Lastly, we note that
Equations \eqref{eq:recursion} and \eqref{eq:pivot} imply that the matrix composed of column vectors $\w_i$ satisfies the conditions of Lemma \ref{thrm:Hermite}. Thus, the collection of rod structures is in Hermite normal form.
\end{proof}

\begin{definition}
\label{def:plumbing}
Let $\txi_i\iso  \xi_i \times T^{n-3}$, $i=1,\dots,l$ where each $\xi_i$ is a $\D^2$-bundle over either a 3-dimensional lens space or $S^1\times S^2$, and let $\{ \p_2, \ldots, \p_l\}\sub \Z^n$ be a collection of primitive vectors satisfying the plumbing relations \eqref{eq:plumbrelations}.
We define the \emph{toric plumbing} of $\txi_1, \ldots, \txi_l$ along the plumbing vectors $\p_2, \ldots, \p_l$ to be the $(n+2)$-dimensional simple $T^n$-manifold given by rod structures $\{\w_1, \ldots, \w_l\}$, where the $\w_i$ are determined by Equations \eqref{eq:recursion}. This simple $T^n$-manifold is denoted by $\plumb{\txi_1, \ldots, \txi_l \big| \p_2, \ldots, \p_l}$.
\end{definition}

\begin{figure}\centering\centering
\begin{tikzpicture}

\tikzmath{\stepsize = 3; \raynum = 10;}
\setoptions[\w_1][\w_2][\w_3][\w_4]

\foreach \n in {1,2,3}
\node at ( \n * \stepsize, 0) [circle,fill,inner sep=1.5pt] (\n) {};
\node at (0,0) (0) {};
\node at (4*\stepsize, 0) (4) {};

\foreach \n/\m in {0/1,1/2,2/3,3/4}
\draw (\n) -- node[pos =.5, below] {$\textoption{\m}$} (\m);

\draw (\stepsize*.5, 0) arc (180:0:\stepsize);
\draw (\stepsize*3.5, 0) arc (0:180:\stepsize);

\foreach \n in {-\raynum,...,\raynum}
\draw[red] (1.5*\stepsize, 0)+(.5*\n*\stepsize/\raynum, 0) -- +(90 - \n*90/\raynum:\stepsize) coordinate(\n);

\foreach \m in {4,...,\raynum}
\draw[red] (\m) to [out=20*10/4-20*\m/4, in=90+90*\m/6-90*4/6] (3.5*\stepsize - .5*\stepsize*\m/7 + .5*\stepsize*3/7, 0);

\foreach \n in {-\raynum,...,\raynum}
\draw[blue] (2.5*\stepsize, 0)+(.5*\n*\stepsize/\raynum, 0) -- +(90 - \n*90/\raynum:\stepsize) coordinate(\n);

\foreach \m in {-4,...,-\raynum}
\draw[blue] (\m) to [out=180 - 20*10/4-20*\m/4, in=10*90/7+\m*90/7] (.5*\stepsize - .5*\stepsize*\m/7 - .5*\stepsize*3/7, 0);

\end{tikzpicture}


\caption{In the figure above we have $\w_1=\e_1$, $\w_2 = \e_2$, $\w_3 = (q_1, r_1, p_1)$, and $\w_4 =q_2 \w_2 + r_2 \w_3 + p_2 \p_2$ in accordance with Equation \eqref{eq:recursion}. The diagram shows a toric plumbing of two disk bundle-torus products $\txi_1$ and $\txi_2$ over lens spaces $L(p_1, q_1)$ and $L(p_2, q_2)$, along plumbing vector $\p_2$. The fibers of $\txi_1$ are given by rays emanating from $\w_2$, while the fibers of $\txi_2$ are given by rays emanating from $\w_3$. Note that in the overlap, the fibers and sections switch roles between $\txi_1$ and $\txi_2$.
}
\label{fig:plumbing2}
\end{figure}
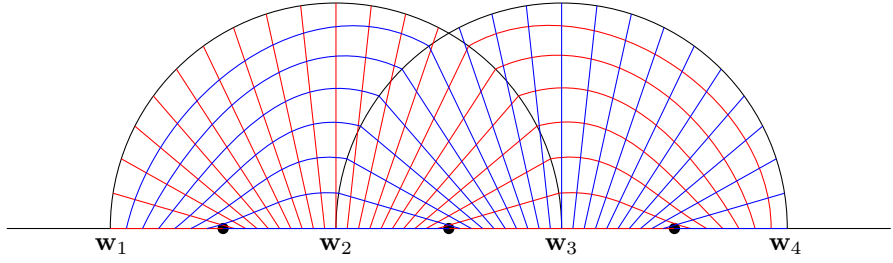

Toric plumbing may be considered as a generalization of standard equivariant plumbing. In the latter construction the base and the fiber have the same dimension, while in the former they do not. In order to elucidate the similarity between the two notions of plumbing, we restrict attention to $n=3$ and consider a simple $T^3$-manifold $\plumb{\txi_1, \txi_2 \big | \p_2}$. First note that this represents a gluing of $\txi_1$ and $\txi_2$. Indeed, the inclusion $\txi_1\into \plumb{\txi_1, \txi_2 \big | \p_2}$ is manifested by the fact that $\{\w_1, \w_2, \w_3\}$ gives the canonical (Hermite normal form) rod diagram for $\txi_1$. Furthermore, the inclusion of $\txi_2$ may be observed by applying a unimodular transformation $Q$ which sends $\w_2$ to $\e_1$, $\w_3$ to $\e_2$, and sends $\p_2$ to $\e_3$ if $\p_2\not=0$, to obtain the rod structures $\{Q \w_2, Q \w_3, Q \w_4\}$ which give the canonical rod diagram for $\txi_2$; the primitivity condition from \eqref{eq:primitivity} guarantees that existence of the matrix $Q$.

Consider now the gluing map between the two bundles. This map will operate between the subsets of $\txi_1$ and $\txi_2$ which are depicted by the overlap in Figure \ref{fig:plumbing2}. This region is an open neighborhood of a single corner, and thus is homeomorphic to $B^4 \times S^1$.
In both $\txi_1$ and $\txi_2$ the corner represents a single (polar) circle in the base $3$-manifold.
The overlap region can further be viewed as a trivialization $\B^2\times \D^2 \times S^1$ of the $\D^2$-bundles $\txi_1$, $\txi_2$
over a neighborhood of a polar circle. Here we use $B^2$ to denote a disk in the base, and $D^2$ to denote a disk in the fiber.
Just as in standard equivariant plumbing, Figure \ref{fig:plumbing2} shows that the $\D^2$ fibers in say $\txi_1$, which are represented by rays emanating from $\w_2$, switch roles in the overlap with the $\B^2$ sections in the base of $\txi_2$.
The gluing map is an automorphism on the overlap $\B^2\times \D^2 \times S^1$, and we have observed that the base and fiber disks $B^2$ and $D^2$ are exchanged in the gluing process. This leaves the circle $S^1$ unaccounted for. Since the automorphism must respect the action of $T^3$ on $\B^2 \times \D^2 \times S^1$, the image of this $S^1$ can be represented uniquely by an element of $\pi_1(T^3)\iso \Z^3$.
Note, however, that the image of $S^1$ in $\Z^3$ does not necessarily coincide with the polar circle, but rather an $S^1\sub T^3$ which acts upon it.
These circle actions are not unique as there are two Killing fields, the ones associated to $\B^2$ and $\D^2$, which vanish on the polar circle. The Lie group homomorphism from $T^3$ to $T^3$ arising from these circle actions should be an isomorphism.
This is the same as requiring that the image of the polar $S^1$, together with the circle actions on $\B^2$ and $\D^2$, forms an integral basis for $\Z^3$. The plumbing vector $\p_2 \in \Z^3$ may then be interpreted as representing the image of the polar circle, with the integral basis criteria being equivalent to the primitivity property \eqref{eq:primitivity}.

Writing a simple $T^n$-manifold as a toric plumbing of disk bundles $\plumb{\txi_1, \ldots, \txi_l \big| \p_2, \ldots, \p_l}$
facilitates the analysis of rod diagrams.
Indeed $\plumb{\txi_1, \ldots, \txi_l \big| \p_2, \ldots, \p_l}$ and $\plumb{\txi'_1, \ldots, \txi'_l \big| \p'_2, \ldots, \p'_l}$ can be distinguished easily, as they are equivariantly homeomorphic if and only if $\txi_j\iso \txi_j'$ and $\p_k=\p_k'$ for all $j$ and $k$.
To see this, use Proposition \ref{thrm:wp} to obtain rod structures $\{\w_1, \ldots, \w_{l+2}\}$ and $\{\w'_1, \ldots, \w'_{l+2}\}$ from the disk bundles and plumbing vectors. These rod structures are automatically in their unique Hermite normal form, and therefore the two simple $T^n$-manifolds are equivariantly homeomorphic if and only if the rod structures are identical.

\begin{remark}
Given a set of bundles $\{\txi_1, \ldots, \txi_l\}$, it may be difficult to determine all possible sets of vectors $\{\p_2, \ldots, \p_l\}$ for which the plumbing relations \eqref{eq:plumbrelations} are satisfied.
However, it is straightforward to check if a given set of vectors $\{\p_2, \ldots, \p_l\}$ satisfies the plumbing relations for the bundles $\{\txi_1, \ldots, \txi_l\}$. Namely, first confirm that each $\p_i$ is a primitive vector. Then simply follow the recursion equations \eqref{eq:recursion} to find all the $\w_i$. If each successive pair $\{\w_i, \w_{i+1}\}$ is admissible, that is, if their second determinant divisor is $1$, then $\{\w_1, \ldots, \w_{l+2}\}$ does indeed give a well defined rod diagram for a manifold. Lastly, check that $\{\w_1, \ldots, \w_{l+2}\}$ is in Hermite normal form. If so, then $\{\p_2, \ldots, \p_l\}$ are valid plumbing vectors for the manifold arising from $\{\w_1, \ldots, \w_{l+2}\}$.
\end{remark}

\begin{figure}[ht!]
	\centering 
\includegraphics[width=15.5cm]{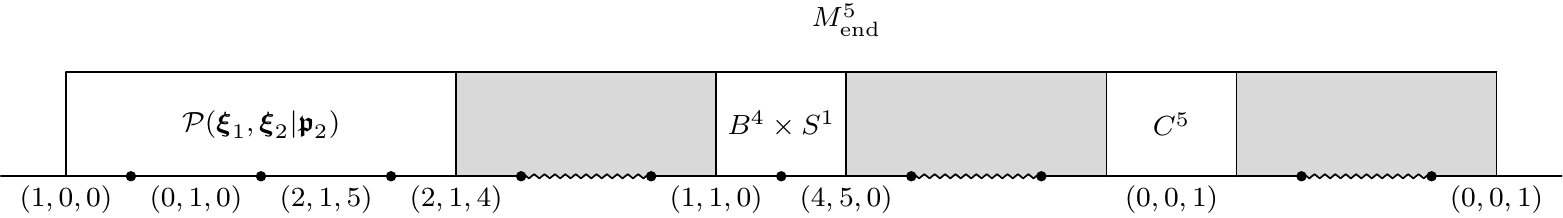}
\caption{This is an example of the decomposition of the domain of outer communication described in Theorem \ref{main:DOC}. The black hole horizons, represented by jagged intervals, are deformation retracts of the gray areas. In the leftmost piece of the decomposition, $\txi_1$ is formed by a disk bundle over $L(5,2)$ with Euler class determined by $1$, while $\txi_2$ is formed by a disk bundle over $L(2,1)$ with Euler class $0$; the plumbing vector is $\p_2 = (1,0,2)$. The remaining pieces include a neighborhood of a corner $B^4 \times S^1$, a region centered on the interior of an axis rod $C^5=[0,1]\times \D^2 \times T^2$, and the asymptotic end $M_{end}^5$ which is homeomorphic to $\R_+ \times S^3 \times S^1$.} 
\label{fig:DOCdecomp}
\end{figure}

The strategy to establish Theorem~\ref{main:DOC} is illustrated in Figure~\ref{fig:DOCdecomp}. More precisely, consider the orbit space of the domain of outer communication, and remove neighborhoods of the horizon rods (corresponding to the gray areas in the diagram). The axis is then broken into connected components, whose neighborhoods in the orbit space lift to one of the pieces in the total space of the decomposition \eqref{gfuiqihgkjso}. In particular, if the neighborhood contains no corners, one corner, or multiple corners then it is represented by $C^{n+2}_k$, $B_{m}^{4}\times T^{n-2}$, or $\plumb{\txi_{1,j},\ldots,\txi_{I_j,j} \big| \p_{2,j}, \ldots, \p_{I_j,j}}$ respectively. 
The remaining portion of the orbit space lifts to the asymptotic end. Clearly any rod diagram that arises from a DOC, with the current hypotheses, can be organized into such pieces. This completes the proof of Theorem~\ref{main:DOC}.

\section{Classification of Compact Spaces}\label{sec:class}

\renewcommand{\B}{B}
\renewcommand{\D}{D}

Theorem \ref{main:456classification} arises from the classification of compact simply connected $T^n$-manifolds of cohomogeneity two in dimensions 4, 5, and 6. In dimensions seven and higher, a complete classification is not known, and the technique used by Oh~\cite{Oh5dim,Oh6dim} in the lower dimensional cases does not appear to generalize to higher dimensions.
On the other hand, the fundamental groups of $(n+2)$-dimensional $T^n$-manifolds can be readily computed in all dimensions by the Seifert-Van Kampen theorem, as recorded in the next result. Note that a portion of part $(i)$ was established within the proof of Theorem 4 in~\cite{hollands2011}.

\begin{theorem}\label{thrm:simple}\

\begin{enumerate}[(i)]
\item Let $M^{n+2}$, $n\geq 1$ be a closed orientable manifold with an effective $T^n$-action. If $M^{n+2}$ is simply connected then it is either the 3-sphere, or a simple $T^n$-manifold where the integral span of its rod structures is $\Z^n$.

\item Let $M^{n+2}$ be a connected simple $T^n$-manifold, possibly with boundary. Suppose that the rod diagram that represents $M^{n+2}$ is given by rod structures $\{\v_1, \ldots, \v_m\}\sub \Z^n$. Then the fundamental group takes the form
\begin{equation}
\label{eq:pi1}
\pi_1(M^{n+2})\iso \Z^n/\spn_\Z\{\v_1, \ldots, \v_m\}
\iso \Z^{n-l} \oplus \Z_{s_1}\oplus \cdots \oplus \Z_{s_l},
\end{equation}
where $s_i | s_{i+1}$ and $s_i$ is the $i^\text{th}$ entry in the Smith normal form of the matrix composed of column vectors $\v_i$, and $l=\dim\spn_\R\{\v_1, \ldots, \v_m\}$.
\end{enumerate}
\end{theorem}

\begin{proof}
Consider part $(i)$. The fundamental group of a $T^n$-manifold of dimension $n+2$ can be calculated from the topology of the quotient space and the bundle structure, using the Seifert-Van Kampen theorem. This was carried out by Orlik and Raymond~\cite[Page 94]{Orlik1974} in the case when the quotient space is an orbifold without boundary, yielding the group presentation
\begin{equation}
\label{eq:pi1orlik}
\begin{split}
\pi_1(M^{n+2}) \iso
& \big\langle \tau_1, \ldots, \tau_n,
\alpha_1, \ldots, \alpha_a,
\gamma_1, \ldots, \gamma_g,
\delta_1, \ldots, \delta_g \big |  \\
&[\tau_i, \tau_j]; [\tau_i, \alpha_j];[\tau_i, \gamma_j];[\tau_i, \delta_j];
\quad \text{for all $i$ and $j$}\\
&[\gamma_1, \delta_1]\cdots [\gamma_g, \delta_g]
\cdot \alpha_1 \cdots \alpha_a
\cdot \tau_1^{c_1}\cdots \tau_n^{c_n};\\
&\alpha_l^{q_l}\cdot\tau_1^{p_{l1}}\cdots \tau_n^{p_{ln}};
\quad \text{for $l=1, \ldots, a$} \big\rangle.
\end{split}
\end{equation}
The generators $\tau$ arise from the torus fibers, the $\alpha$'s represent loops around each of the $a$ orbifold points, and the $\gamma$'s and $\delta$'s are generators associated with each of the $g$ handles. In the first line of relations we see that the $\tau$'s commute with themselves as they are the generators of a torus, and commute with the $\alpha$'s, $\gamma$'s, and $\delta$'s since the former are generators of the fiber and the latter are generators in base space $M^{n+2}/T^n$.
In analogy with the presentation of the fundamental group of a genus $g$ surface, the second line of relations represents the obstruction to contractibility of the circumscribing loop around all of the handles and orbifold points. That loop is homotopic to the loop around the fibers described by $\mathbf{c}=(c_1,\dots,c_n)\in \Z^n\iso \pi_1(T^n)$.
The last line of relations indicates how each orbifold point singularity is to be resolved, namely, going around the $i$-th orbifold point $q_i\neq 1$ times is equivalent to going around each of the torus fibers $p_{ij}$ times.

We wish to show in this case that $M^{n+2}\iso S^3$. To do that, let the list of generators in Equation \eqref{eq:pi1orlik} be denoted by $\G$ and the list of relations by $\mathcal{R}$, so that $\pi_1(M^{n+2})\iso \group{\G}{\mathcal{R}}$ is trivial.
Clearly then the group
$\mathcal H_1=\group{\G}{\mathcal{R}\cup \{[\alpha_i, \alpha_j], \gamma_k, \delta_k \}}$
is also trivial. This is an abelian group which can be presented as
\begin{equation}
\mathcal{H}_1 = \left(\Z^a\oplus \Z^n\right)/\spn_\Z\{(\mathbf{1},\mathbf{c}), (q_1 \mathbf{e}_1, \mathbf{p}_1), \ldots, (q_a \mathbf{e}_a ,\mathbf{p}_a) \},
\end{equation}
where $\mathbf{1}\in\Z^a$ is the vector consisting of all $1$'s and $\mathbf{p}_l =(p_{l1},\ldots,p_{ln})\in\mathbb{Z}^n$. 
The number of generators is $a+n$, and the number of relations is $a+1$, hence $\mathcal H_1$ can only be trivial if $n\leq 1$.
If $n=1$ then $M^{n+2}$ is a simply connected closed $3$-manifold, and thus is homeomorphic to $S^3$. 

We now consider the case where the quotient has boundary, that is $\partial \left(M^{n+2}/T^n\right)\ne \emptyset$. The fundamental group in this case was calculated by Hollands and Yazadjiev~\cite[Theorem 3]{hollands2011}, and takes the form
\begin{equation}
\label{eq:pi1hollands}
\begin{split}
\pi_1(M^{n+2}) \iso
& \big\langle \tau_1, \ldots, \tau_n, \alpha_1, \ldots, \alpha_a, \beta_1, \ldots, \beta_b, \gamma_1, \ldots, \gamma_g, \delta_1, \ldots, \delta_g \big |  \\
&[\tau_i, \tau_j]; [\tau_i, \alpha_j];[\tau_i, \beta_j];[\tau_i, \gamma_j];[\tau_i, \delta_j];
\quad \text{for all $i$ and $j$}\\
&[\gamma_1, \delta_1]\cdots [\gamma_g, \delta_g] \cdot \alpha_1 \cdots \alpha_a \cdot \beta_1\cdots \beta_b;\\
&\alpha_l^{q_l}\cdot\tau_1^{p_{l1}}\cdots \tau_n^{p_{ln}};
\quad \text{for $l=1, \ldots, a;$}\\
&\tau_1^{v_{k1}}\cdots \tau_n^{v_{kn}};
\quad\text{for $k=1, \ldots, m$} \big\rangle.
\end{split}
\end{equation}
The extra generators $\beta$ represent the $b$ boundary components of the orbit space which are homeomorphic to circles; on these components the torus action does not degenerate. Additional relations are included for these generators showing that they commute with the generators of the torus fibers. Moreover, the last line of relations is given by rod structures $\{\v_1, \ldots, \v_m\}$ for $M^{n+2}$ where each $\v_k=(v_{k1}, \ldots, v_{kn})$ represents a generator of the isotropy subgroup along the corresponding rod.
As before denote the generators of \eqref{eq:pi1hollands} by $\G$ and the list of relations by $\mathcal{R}$.
We can immediately determine that $g=0$ by examining $\group{\G}{\mathcal{R}\cup\{\tau_i, \alpha_j, \beta_\ell\}}$, which is in fact the fundamental group of a genus $g$ surface.
Next consider the subgroup $\group{\G}{\mathcal{R} \cup \{\tau_i, \alpha_j\}} = \group{\beta_1, \ldots, \beta_b}{\beta_1\cdots \beta_b}$, and observe that it is trivial only when all $\beta_i =1$, or rather $b=1$. Now consider the abelian group $\mathcal{H}_2 = \group{\G}{\mathcal{R}\cup\{\tau_i, [\alpha_i, \alpha_j]\}}$, which may be presented as
\begin{equation}
\mathcal H_2 = \Z^a/\spn_\Z \{ \mathbf{1}, q_1 \mathbf{e_1}, \dots, q_a \mathbf{e_a}\}.
\end{equation}
This group cannot be trivial unless $q_1=\dots=q_a=1$, however this contradicts the nature of $q_i$, and thus $a=0$. We then find that
\begin{equation}
\group{\G}{\mathcal{R}} = \Z^n/\spn_\Z\{\v_1, \ldots , \v_m\},
\end{equation}
and note that this is trivial only if the integral span of the rod structures is $\Z^n$.

Lastly, we will establish part $(ii)$. 
Notice that Equation~\eqref{eq:pi1hollands} reduces to the first equality in Equation~\eqref{eq:pi1} when $M^{n+2}$ is a simple $T^n$-space, since in this situation $M^{n+2}/T^n$ has no holes, handles, or orbifold points.
Furthermore, recall that the Smith normal form of the matrix $( \v_1, \v_2, \ldots, \v_m )$ is obtained by both left and right actions using unimodular matrices. This does not alter the integral span of the columns. Thus, as in the classification of finitely generated abelian groups, by a change of basis given by these unimodular matrices, we obtain the second equality in~\eqref{eq:pi1}.
\end{proof}

Theorem \ref{thrm:simple} may be used to as a tool to analyze the topology of the domain of outer communication for stationary vacuum $n$-axisymmetric spacetimes. A conjecture providing a topological classification of the DOCs in the asymptotically Kaluza-Klein setting, and under a spin assumption, has been put forth by Hollands-Ishibashi in \cite[Conjecture 1]{hollands2012}. We now recall the original statement.

\begin{conjecture*}[Hollands-Ishibashi]
Assume that $\mathcal{M}^{n+3}$, $n\geq 2$ is the domain of outer communication of a well-behaved asymptotically flat or asymptotically Kaluza-Klein spacetime which is spin, has Ricci tensor satisfying the null-convergence condition, and admits an effective $U(1)^{n}$ action. Then any Cauchy surface $M^{n+2}$ can be decomposed as
\begin{equation}\label{aognwoihj}
M^{n+2} \iso \of{ \#^{n}_{i=2} m_i \cdot \of{S^i \times S^{n+2-i} } \# (\text{Asymptotic Region} )}\setminus \text{Black Holes},
\end{equation}	
where the asymptotic region depends on the precise boundary conditions; e.g. in the standard Kaluza-Klein setup $\R^3\times T^{n-1}$.
\end{conjecture*}

This conjecture
implies that the fundamental group for the Cauchy surface always agrees with the fundamental group of the asymptotic region. Indeed, recall that taking a connected sum with simply connected space $S^k \times S^{n+2-k}$ does not affect the fundamental group, and neither does removing the black hole regions as can be seen from topological censorship, or alternatively by using Theorem \ref{thrm:simple}. The next proposition provides an explicit static vacuum counterexample to the above conjecture.

\begin{prop}
\label{prop:counter example}
There exists a well-behaved asymptotically Kaluza-Klein static bi-axisymmetric vacuum spacetime $\mathcal{M}^5 =\mathbb{R}\times M^4$, which is devoid of conical singularities and has two spherical horizons. The domain of outer communication is spin and simply connected, while its asymptotic region is not simply connected. In particular, the Cauchy surface $M^4$ violates Conjecture 1 of \cite{hollands2012}.
\end{prop}

\begin{proof}
Consider the rod diagram consisting of rod structures $\{(1,0), (0,0), (0,1), (0,0), (1,0)\}$.
According to Theorem \ref{theorem1}, there exists a well-behaved asymptotically Kaluza-Klein static bi-axisymmetric vacuum spacetime $\mathcal{M}^5 =\mathbb{R}\times M^4$, whose orbit space $M^4/T^2$ is a half-plane admitting this rod diagram. The two $(0,0)$ rods represent $S^3$ horizons, and the two semi-infinite rods $(1,0)$ give rise to the asymptotically Kaluza-Klein end $M^4_{end}\cong \mathbb{R}^3 \times S^1$. Moreover, in \cite[Section 6]{khuriweinsteinyamada2018} it is shown that there are no conical singularities on the two semi-infinite rods. The spacetime metric may be expressed in Weyl-Papapetrou form as in \eqref{weyl}. Furthermore, since the Killing field $\partial_{\phi^2}$ that degenerates on the middle axis rod $(0,1)$ does not affect the cone angle at the two semi-infinite rods, or the asymptotics in $M^{4}_{end}$ other than the size of the $S^1$ factor, we may scale the $\phi^2$ coordinate appropriately to relieve any angle defect on this rod. The spacetime is then regular. 

We will now analyze the topology of the domain of outer communication. First observe that Theorem \ref{thrm:simple} implies that $M^4$ is simply connected, while clearly $\pi_1(M^4_{end})=\mathbb{Z}$.
Next, fill in each $S^3$ horizon with a 4-ball $B^4$. This may be accomplished in the rod diagram by connecting the rods flanking the horizons with a single corner. As for the asymptotic end, a cross-section has the topology $S^1 \times S^2$, and thus may be filled in with an $S^1 \times B^3$. The asymptotic end is flanked by the rods $(1,0)$ and $(1,0)$, and thus the filling may be achieved in the rod diagram by extending one of these semi-infinite axis rods until it reaches the other, so that a single axis rod with the same rod structure is formed out of the two semi-infinite rods. Note that these fill-ins respect the $T^2$-structure by construction. After filling in the horizons and capping off the asymptotic end, we are left with a closed simple $T^2$-manifold having a rod diagram consisting of only two axis rods of rod structures $(1,0)$ and $(0,1)$, which meet at two admissible corners. This is the rod diagram for $S^4$. Therefore, the DOC $M^4$ is homeomorphic to $S^4 \setminus (\B^4 \sqcup \B^4 \sqcup S^1\times \B^3)$ which is homotopic to $\R^4\setminus (\{\text{pt.}\} \sqcup S^1)$, which is a spin manifold.

Now assume by way of contradiction that Conjecture 1 of \cite{hollands2012} is true. Although the black hole region is unknown, it cannot intersect the asymptotic region, by definition. We can therefore rearrange terms in \eqref{aognwoihj} to find $M^4 \iso \of{ \of{\# m_2 \cdot S^2 \times S^2 }\setminus \text{Black Holes}} \# \of{\text{Asymptotic Region}}$. Recall that in three or more dimensions, the fundamental group of a connected sum is the free product of the fundamental groups of its components. Moreover, as stated in the conjecture, the asymptotic region for the standard Kaluza-Klein setup is $\R^3\times S^1$. Therefore, there is an injective homomorphism $\Z\iso \pi_1(\R^3\times S^1)\into \pi_1 (M^4)$. This leads to a contradiction, since we have already seen that $M^4 \iso \R^4\setminus ( \{\text{pt.}\} \sqcup S^1)$, which is simply connected.
\end{proof}

Even though Conjecture 1 of \cite{hollands2012} is not true as stated, the spirit of the conjecture
which suggests that in the spin case Cauchy surfaces are primarily comprised of connected sums of products of spheres, may nevertheless remain valid. In fact Theorem \ref{main:456classification}, which will be proven at the end of this section, confirms this sentiment in low dimensions. We are thus motivated to formulate a refined version, Conjecture \ref{main:refined}, and will give a proof of this conjecture for spacetime dimensions $5$, $6$, and $7$. The primary difference between the revised and original versions is that instead of removing the black hole regions and including a connected sum to the asymptotic end, we consider closed extensions $\bar{M}^{n+2}\supset M^{n+2}\setminus M^{n+2}_{end}$. These extensions, which may be viewed as compactified domains of outer communication, fill in the asymptotic region as well as every horizon to form a closed manifold. Theorems \ref{thrm:fillin} and \ref{thrm:simple} show that it is always possible to perform such fill-ins and obtain a closed, simply connected $T^n$-manifold, albeit the compactified DOC $\bar{M}^{n+2}$ may not be spin.

\begin{prop}
\label{thrm:conjecture proof}
Conjecture \ref{main:refined} is valid when $n=2$, $3$, or $4$, if the compactified domain of outer communication is spin.
\end{prop}

\begin{proof}
Let $M^{n+2}$ be a Cauchy surface for the domain of outer communication of the spacetime $\mathcal{M}^{n+3}$ satisfying the desired hypotheses. Since all Cauchy surfaces are homeomorphic, we can without loss of generality assume that $M^{n+2}$ admits a $U(1)^n$ symmetry. This, together with the topological censorship theorem, shows that $M^{n+2}$ is a simple $T^n$-manifold~\cite[Theorem 9]{hollands2012}. To construct the compactified DOC $\bar{M}^{n+2}\supset M^{n+2}\setminus M^{n+2}_{end}$, we cap off the asymptotic region and fill in all of the horizons in such a way that the total space is simply connected, by adding additional rods. Theorem \ref{thrm:fillin} describes how to construct the fill-ins from the rod diagram, while Equation \eqref{eq:pi1} explains how to make the total space simply connected. If $n=2$, $3$, or $4$, and if $\bar{M}^{n+2}$ is spin, then by Theorem \ref{main:456classification} it is homeomorphic to a connect sum of products of spheres.
\end{proof}

It is likely the case that a spin DOC yields a spin compactified DOC in the proof of this proposition, in which case Conjecture \ref{main:refined} would be fully verified for $n=2$, $3$, or $4$. Furthermore, Proposition \ref{thrm:conjecture proof} can be generalized to include the non-spin case where $\bar{M}^{n+2}$ will instead be homeomorphic to a manifold in the third row of the table from Theorem \ref{main:456classification}. In addition, it should be noted that the refined conjecture can be extended to the setting where geometric regularity of the spacetime metric is not required. This is relevant to applications of Theorem \ref{theorem1}, since generic spacetimes produced by this result may include conical singularities on the axes.

\begin{remark}
A slightly modified version of Proposition \ref{thrm:conjecture proof} holds true when the spacetime $\mathcal{M}^{n+3}$ has conical singularities on its axis rods. To see this, observe that the only place where geometric regularity of the metric becomes relevant, is when the topological censorship theorem is utilized. Thus, the regularity assumption as well as the null energy condition may be removed from the hypotheses of Conjecture \ref{main:refined}, if the topological censorship principle is added in their place.
This principle, together with the $U(1)^n$ symmetry, guarantees that the Cauchy surface $M^{n+2}$ is a simple $T^n$-manifold. The remaining portion of the proof then proceeds without change. In fact, the conjecture is at its core a purely topological statement.
\end{remark}

\begin{mainConjecture}
\label{conj:top}
Let $n\geq 1$. Any closed, spin, simply connected $(n+2)$-manifold with an effective $T^n$-action is homeomorphic to either $S^3$, $S^4$, $S^5$, or $\#^{n}_{i=2} m_i \cdot S^i\times S^{n+2-i}$.
\end{mainConjecture}

It does not appear that this conjecture has previously been recorded in the literature. However,
it should be noted that McGavran claimed in~\cite[Theorem 3.6]{McGavran79} (see also \cite{McGavran77}) to have proven a similar statement. Oh \cite{Oh5dim} pointed out flaws in McGavran's argument, and in fact provided counterexamples to his claims. Oh's  work on this topic \cite{Oh5dim,Oh6dim}, along with Orlik and Raymond's classification \cite{orlik1970actions} in the 4-dimensional case, remains the best evidence towards Conjecture~\ref{conj:top}.

\begin{proof}[Proof of Theorem \ref{main:456classification}]
We may follow the same line of argument as in the proof of Proposition \ref{thrm:conjecture proof}. In particular, by applying Theorems \ref{thrm:fillin} and \ref{thrm:simple} to cap-off the asymptotic end and fill-in the horizons, we arrive at a compactified domain of outer communication $\bar{M}^{n+2}$ which is closed, simply connected, and admits an effective $T^n$-action. Moreover, this process of capping-off and filling-in may be accomplished in an algorithmic manner, as explained in the proof of Theorem \ref{thrm:fillin}. We may then apply the classification results for such manifolds given in \cite{Oh5dim,Oh6dim,orlik1970actions} for $n=2,3,4$, to obtain the chart presented in Theorem \ref{main:456classification}.
\end{proof}

\bibliographystyle{amsplain}
\bibliography{mybib}
\end{document}